\newtheorem{theorem}{Theorem}[section]
\newtheorem{lemma}[theorem]{Lemma}
\newtheorem{corollary}[theorem]{Corollary}
\numberwithin{equation}{section}
\theoremstyle{definition}
\newtheorem{definition}[theorem]{Definition}
\theoremstyle{remark}
\newcommand{\bD}{\mathbb{D}}
\newcommand{\bC}{\mathbb{C}}
\newcommand{\bN}{\mathbb{N}}
\newcommand{\cP}{\mathcal{P}}
\newcommand{\ra}{\rightarrow}
\newcommand{\pa}{\partial}
\DeclareMathOperator{\diam}{diam}
\begin{document}
\onehalfspacing

\title{A Jordan Curve that cannot be Crossed by Rectifiable Arcs on a set of Zero Length}


\author{Jack Burkart}
\address{Department of Mathematics, Stony Brook University, Stony Brook, NY 11794}
\email{jack.burkart@stonybrook.edu}





\begin{abstract}
We construct a Jordan curve $\Gamma \subset \bC$ so that for any rectifiable arc $\sigma$ with endpoints in distinct complementary components of $\Gamma$, $H^1(\sigma \cap \Gamma) > 0$.
\end{abstract}


 \maketitle 



\section{Introduction and Motivation}
A Jordan curve is an injective image of the unit circle in the plane. The Jordan Curve Theorem asserts that a Jordan curve separates the plane into exactly two complementary components, a bounded interior component and an unbounded exterior component. This gives Jordan curves rather simple topology, but their geometry can be exotic. We say an arc \textit{crosses} a Jordan curve $\Gamma$ if its endpoints belong in different complementary components of $\Gamma$. Let $H^1$ denote the Hausdorff $1$-measure. In this paper, we prove the following theorem, answering a question posed by Sauter in \cite{MS:twisted-b}.

\begin{theorem}
\label{Main}
There exists a Jordan curve $\Gamma$ so that if $\sigma$ is a rectifiable (finite length) arc crossing $\Gamma$, $H^1(\sigma \cap \Gamma) > 0$.
\end{theorem}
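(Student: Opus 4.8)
The goal is to build a Jordan curve $\Gamma$ that is so "twisted" at every scale that any rectifiable arc crossing it must intersect $\Gamma$ in a set of positive length. The natural strategy is an iterated construction: start with a simple curve (say a segment or a square boundary) and repeatedly replace pieces by wiggly "bump" or "comb" pieces, producing a nested sequence of polygonal Jordan curves $\Gamma_n$ converging (in the Hausdorff metric, with uniform control so the limit is still a Jordan curve) to $\Gamma$. The curve $\Gamma$ will carry a natural probability measure $\mu$ (the weak limit of normalized length measures, or a Frostman-type measure supported on $\Gamma$) which will be the key device: I want to arrange that $\mu$ is, in a quantitative sense, "spread out along every crossing direction."

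**Key steps.** First, I would fix the combinatorial gadget: a replacement rule that takes an edge and substitutes a zig-zag/comb with $N$ teeth of a controlled aspect ratio, arranged so that (a) the diameter of the new piece is comparable to that of the old edge, (b) the new piece still bounds, together with its companions, a Jordan domain, and (c) crucially, any path that gets from one side of the old edge to the other must either cross one of the teeth or travel a definite length along a "corridor." Second, I would track a measure $\mu_n$ on $\Gamma_n$ and show the estimates are self-similar: there is a constant so that for a ball $B$ of radius $r$ centered on $\Gamma$, and for any line segment $L$, the curve $\Gamma$ inside $B$ contains a sub-collection of teeth whose total "width transverse to $L$" is bounded below by a fixed fraction of $r$ — this is the heart of the matter and forces crossing arcs to accumulate length. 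Third, I would pass to the limit: choose the number of teeth $N_n$ and scales $\delta_n$ at stage $n$ growing fast enough that the construction converges to a genuine Jordan curve (using, e.g., a modulus-of-continuity bound on the parametrizations à la the classical von Koch / Sauter twisted-ball constructions), yet the transversality estimate survives in the limit.

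**Deriving the conclusion.** Given the curve $\Gamma$ with its transversality property, suppose $\sigma$ is a rectifiable arc with endpoints in the two complementary components. Then $\sigma$ crosses $\Gamma$, and since $\sigma$ is connected and joins the interior to the exterior, for every scale $r$ small enough the arc $\sigma$ must traverse the "annular" region of $\Gamma$'s construction at that scale; using the transversality estimate at scale $r$ one shows that $H^1(\sigma \cap \Gamma)$ cannot be zero — more precisely, if it were zero one could cover $\sigma \cap \Gamma$ by intervals of arbitrarily small total length, but then $\sigma$ would be forced to avoid the teeth at some stage, contradicting that it actually gets from one side to the other (this is where one invokes that removing a small-length neighborhood of $\Gamma$ from $\sigma$ leaves $\sigma$ disconnected into interior and exterior pieces that cannot be rejoined). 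Equivalently, one can run the contrapositive through the measure $\mu$: a rectifiable arc meeting $\Gamma$ in zero length would have to "dodge" $\mu$-mass that the transversality forces it to hit.

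**Main obstacle.** The delicate point is balancing two competing demands: the teeth must be numerous and thin enough at each scale to guarantee the transversality/crossing-length lower bound at \emph{every} scale simultaneously (so no rectifiable shortcut exists), yet the accumulated distortion must be controlled so that the limit is an injective curve rather than something that pinches or develops positive area. Managing this trade-off — essentially a careful bookkeeping of how the "bump" parameters $N_n, \delta_n$ feed into both a lower bound for crossing length and an upper bound for the oscillation of the parametrization — is where the real work lies, and is presumably why the construction is subtle enough to have been posed as an open question in \cite{MS:twisted-b}.
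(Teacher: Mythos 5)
Your sketch captures the right general flavor --- a nested, all-scales ``comb'' construction with a trade-off between the fineness of the teeth and the convergence of the parametrizations --- and this is indeed the spirit of the paper, which builds $\Gamma$ as a nested intersection of axis-aligned ``plumbings'' that weave back and forth inside one another. However, there is a genuine gap at the decisive step. Your deduction of positive length reads: if $H^1(\sigma\cap\Gamma)=0$ one could cover $\sigma\cap\Gamma$ by intervals of arbitrarily small total length, and then $\sigma$ would be forced to avoid the teeth at some stage. This does not follow. A crossing arc can meet $\Gamma$ in a set that is uncountable, topologically essential (it disconnects the arc into interior and exterior pieces), and even of Hausdorff dimension $1$, while still having zero length --- Bishop's example cited in the introduction does exactly this for line segments. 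Covering $\sigma\cap\Gamma$ by short intervals neither disconnects $\sigma$ nor prevents it from threading every tooth. The entire difficulty of the theorem lives in the quantitative gap between ``the piercing set is large'' and ``the intersection has positive length,'' and your argument does not bridge it. What the paper actually does is a length-bookkeeping argument: it fixes a straight piece of some $\Gamma_n$ that $\sigma$ crosses top to bottom, sets $l_m = H^1(\sigma\cap\Gamma_m)$, and proves $\sum_m (l_m - l_{m+1}) \le w_n/10 + w_n/100$, so that $H^1(\sigma\cap\Gamma)=\lim l_m \ge \tfrac{9}{10}w_n - \tfrac{1}{100}w_n>0$. The mechanism controlling $l_m-l_{m+1}$ is a combinatorial ``rook placement'': the only way for $\sigma$ to lose length is to pass through the thin gaps placed at the rooks, and the rook geometry forces each \emph{extra} gap (beyond the unavoidable minimum that even a vertical segment meets) to cost a definite amount of horizontal travel; since $l(\sigma)\le 1$, the arc can afford only a bounded number of extra gaps, and the total loss is summable and small. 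Nothing in your sketch plays the role of this per-stage loss estimate.

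A second, smaller but concrete error: you insist that the limit must not ``develop positive area,'' treating that as a defect to be engineered away. In fact any Jordan curve satisfying the conclusion of the theorem \emph{must} have positive area (integrate over a family of parallel segments and apply Fubini: if $\Gamma$ had zero area, almost every segment in the family would meet $\Gamma$ in zero length, and some of these cross $\Gamma$). The paper's curve does have positive area, and this is unavoidable; a construction designed to keep the area zero cannot prove the theorem. Your separate concern about injectivity of the limit parametrization is legitimate and is handled in the paper by making the widths $w_n$ summable and showing adjacent-stage core curves differ by $O(w_n)$, but it is independent of the area issue.
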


We would like to make some more general remarks about Theorem \ref{Main}. It is a straightforward calculation from the construction that $\Gamma$ will have positive area, but a simple argument using Fubini's theorem and integrating over a family of line segments shows that any $\Gamma$ satisfying the conclusion of Theorem \ref{Main} must have positive area.

By the Riemann mapping theorem and Caratheodory's theorem, any Jordan curve can be crossed by an arc of $\sigma$-finite length that intersects the Jordan curve at exactly one point. Such an arc is analytic everywhere except possibly at the point where it crosses the boundary. In this sense, the hypothesis of rectifiablility is the weakest hypothesis on $\sigma$ so that the conclusion of Theorem \ref{Main} will hold. 

Theorem \ref{Main} has been addressed in other contexts, both in the class of arcs $\sigma$ considered and the size of the intersection of $\sigma \cap \Gamma$. Examples of Jordan curves $\Gamma$ that cannot be crossed by line segments at only one point seem well known, see for example \cite{mathoverflow}. In \cite{BishopSegment}, Bishop constructs a Jordan curve so that if $\sigma$ is a line segment crossing $\Gamma$, the Hausdorff dimension of $\sigma \cap \Gamma$ is $1$, but the case of positive length is not addressed. Motivated by questions about ordinary differential equations, in \cite{PughWu}, Pugh and Wu show that Jordan curves that cannot be crossed by rectifiable arcs at exactly one point exist and are actually generic (in a Baire sense) in the space of Jordan curves in the plane (equipped with an appropriate metric). 

The curve $\Gamma$ also has connections to conformal welding. If $\Gamma$ is a Jordan curve in the plane, let $\Omega$ denote its bounded complementary component. Then there are conformal mappings $f: \bD \ra \Omega$ and $g: \bC \setminus \overline{\bD} \ra \bC \setminus \overline{\Omega}$ which induce a circle homeomorphism $h: g \circ f^{-1}: \pa \bD \ra \pa \bD$ which we call a \textit{conformal welding}. $\Gamma$ is called \textit{log-singular} if there is a Borel set $E$ so that $E$ and $h(S^1 \setminus E)$ both have zero logarithmic capacity (for the definition of logarithmic capacity, see Chapter III of \cite{Garnett}). Results of Buerling in \cite{BeurlingEnsembles} show that if $\varphi: \bD \ra \Omega$ is a conformal mapping onto a Jordan domain $\Omega$, and $\zeta \in S^1$.
$$\int_0^1 |\varphi'(r\zeta)| dr < \infty$$
except perhaps on a set of zero logarithmic capacity (see also 23(a), p. 127, in \cite{Garnett}). It follows that outside of this exceptional set, the image of the hyperbolic geodesic rays $[0,\zeta)$ have finite length. Applying this result to both $f$ and $g$, it follows from Theorem \ref{Main} that $\Gamma$ is log-singular. Otherwise, by using a pair of two finite length geodesics with the same endpoint on $\Gamma$, we could construct a finite length arc crossing $\Gamma$ at exactly one point. Our example gives the first explicit construction of a log singular curve. For more on log-singular curves and conformal welding, see \cite{BishopFlexible} and \cite{BishopKoebe}. 

We will construct the curve $\Gamma = \bigcap_{n=1}^{\infty} \Gamma_n$ in Theorem \ref{Main} as a nested intersection of axis aligned tubes which we call \textit{plumbings}. Given a plumbing, we will use a Lakes of Wada type of construction to construct a new plumbing which weaves back and forth inside of itself, see Figure \ref{sample}. For a rectifiable curve  $\sigma$ that passes from the interior boundary component to the exterior boundary component of $\Gamma$, we let $l_n = H^1(\Gamma_n \cap \sigma).$ In order for $l_{n+1}$ to be much smaller than $l_n$, $\sigma$ must weave back and forth horizontally so that it can pass through the vertical gaps, as seen in Figure \ref{sample}. We will tune the parameters defining the plumbings $\Gamma_n$ so that $\sigma$ can only pass through a increasingly smaller percentage of the vertical gaps of $\Gamma_n$ as $n \ra \infty$. It will follow that most of the time, the connected components of $\sigma \cap \Gamma_{n+1}$ must pass through those components approximately as a straight line segment, and we will show directly that a straight line segment intersects $\Gamma$ with a set of positive length. These observations combined will allow us to prove Theorem \ref{Main}.

\begin{figure}[!h]
	\centerline{ \includegraphics[height=3in,width=5in]{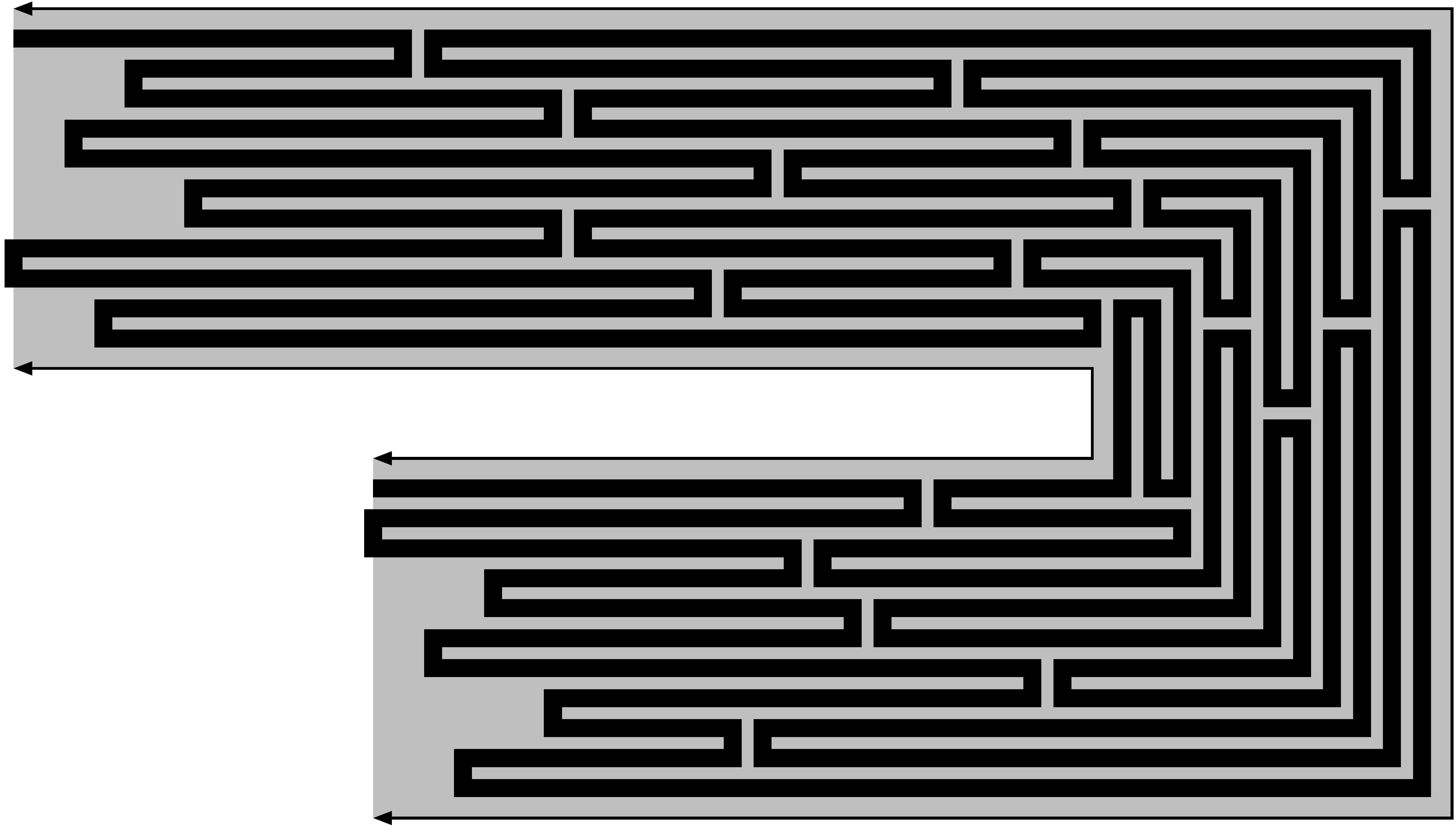} }
	\caption{An example of the procedure taking a plumbing $\Gamma_n$ (pictured in gray) to a new plumbing $\Gamma_{n+1}$ (pictured in black). For a rectifiable curve to intersect $\Gamma$ on a set of zero length, it must intersect $\Gamma_{n}$ on a set of length which tends to $0$ as $n \ra \infty$. This requires $\sigma$ to weave back and forth too much in comparison to the width of the rectangles that define the plumbings, and this will cost a large amount of length if this happens too often.}
	\label{sample}
\end{figure}

The structure of the paper is as follows. In Section 2, we define all the necessary terminology and reduce Theorem $1.1$ to the case of rectifiable curves with length $1$. In Section $3$, we carefully define plumbings and discuss their properties. In Sections 4 and 5, we show how to create new plumbings from old to create the limiting Jordan curve $\Gamma$. In Section 6, we show that the Jordan curve cannot be crossed at a single point by rectifiable curves, and in Section 7 we refine this to show the intersection of the rectifiable curve with the Jordan curve has positive area.

\section{Preliminaries and Generalities}
A \textit{curve} is a continuous function $\sigma:[0,1] \ra \bC$. If $\sigma$ is a curve with $\sigma(0) = \sigma(1)$ we call $\sigma$ \textit{closed}. We will sometimes refer to curves that are not closed as \textit{arcs}. A curve $\sigma$ is called $\textit{simple}$ if for all $t_1 \neq t_2 \in [0,1]$, we have $\sigma(t_1) \neq \sigma(t_2)$.  Similarly, we call a closed curve simple if for all $t_1 \neq t_2 \in [0,1)$, we have $\sigma(t_1) \neq \sigma(t_2)$.  A \textit{Jordan curve} is a simple closed curve. The image $\sigma([0,1]) \subset \bC$ is the \textit{trace} of $\sigma$. When it will not cause confusion, we will refer the trace of a curve $\sigma$ and the curve $\sigma$ interchangeably. Recall that the \textit{length} of a curve $\sigma$ is defined to be
$$l(\sigma) = \sup_{0 =t_0 \leq t_1 \leq \dots \leq t_n =1} \sum_{k=1}^n |\sigma(t_k) - \sigma(t_{k-1})|.$$ 
If $l(\sigma) < \infty$, we call the curve or arc $\sigma$ \textit{rectifiable}.



\begin{definition}
Let $\Gamma \subset \bC$ be a Jordan curve. We say that an arc $\sigma$ \textit{crosses} $\Gamma$ if $\sigma(0)$ and $\sigma(1)$ are in distinct complementary components of $\Gamma$. 
\end{definition}
If $\sigma$ crosses $\Gamma$, we are interested in the subset of points where $\sigma$ passes from one complimentary component of $\Gamma$ to the other.
\begin{definition}
We say that $\sigma$ \textit{pierces $\Gamma$ at $x$} if for every $\epsilon >0$, $B(x,\epsilon) \cap \sigma$ contains points in both complementary components of $\Gamma$. If $\sigma$ crosses $\Gamma$, the \textit{piercing set} of $\sigma$, $\cP(\sigma)$, is the nonempty set of all points $x \in \Gamma$ where $\sigma$ pierces $\Gamma$ at $x$. $\Gamma$ is called \textit{pierceable} by rectifiable curves if there exists a rectifiable curve $\sigma$ crossing $\Gamma$ so that $\cP(\sigma)$ is exactly one point. Otherwise $\Gamma$ is \textit{unpierceable} by rectifiable curves.
\end{definition}

The definition of pierceability can be easily adjusted to include other families of curves, such as line segments, simple rectifiable curves, and rectifiable curves of some given length. If $\Gamma$ is unpierceable by rectifiable curves, the piercing set is nontrivial.

\begin{lemma}
\label{Cantor}
Let  $\Gamma$ be a Jordan curve. The following are equivalent: 
\begin{enumerate}
	\item $\Gamma$ is unpierceable by rectifiable arcs.
	\item $\Gamma$ is unpierceable by simple rectifiable arcs.
	\item $\Gamma$ is unpierceable by simple rectifiable arcs with length $\leq 1$
	\item For any simple rectifiable $\sigma$ crossing $\Gamma$, $\cP(\sigma)$ is uncountable.
	\item For any rectifiable $\sigma$ crossing $\Gamma$, $\cP(\sigma)$ is uncountable.
\end{enumerate} 
\end{lemma}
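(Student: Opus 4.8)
The plan is to establish the cycle of implications
$(5) \Rightarrow (4) \Rightarrow (1)$ together with $(1) \Leftrightarrow (2) \Leftrightarrow (3)$, since once $(4)$ and $(5)$ are known to follow from the unpierceability conditions the equivalences close up. The easiest links are the trivial reductions: $(5) \Rightarrow (4)$ is immediate since a simple arc is an arc, and $(4) \Rightarrow (1)$ and $(2) \Rightarrow (1)$ are immediate because an uncountable (resp. nonempty of cardinality $>1$) piercing set is in particular not a single point. Likewise $(3) \Rightarrow (2)$ and $(2) \Rightarrow (3)$ reduce to the observation, which I would isolate as the first real step, that \emph{any} rectifiable arc $\sigma$ crossing $\Gamma$ contains a \emph{simple} subarc $\tilde\sigma$ that still crosses $\Gamma$, and moreover (after an affine rescaling of the domain) one may assume $l(\tilde\sigma) \le 1$; the key point is that passing to a subarc can only shrink the piercing set, so $\cP(\tilde\sigma) \subseteq \cP(\sigma)$, and a subarc with endpoints in the two complementary components exists by a standard extraction argument. (One picks the last time $\sigma$ leaves the closure of one component before entering the other, or invokes that a rectifiable arc contains a simple arc with the same endpoints — e.g. reparametrize and use that a path between two points contains an arc between them.)

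Next I would handle the two substantive implications, namely that unpierceability forces the piercing set to be uncountable: concretely $(3) \Rightarrow (5)$, or equivalently I would show directly that if $\sigma$ is a rectifiable arc crossing $\Gamma$ and $\cP(\sigma)$ is \emph{countable}, then $\Gamma$ is pierceable by a simple rectifiable arc of length $\le 1$. The mechanism is a surgery/cut-and-paste argument. Write $\cP(\sigma) = \{x_1, x_2, \dots\}$ (finite or countably infinite). The set $\sigma \setminus \cP(\sigma)$ is relatively open in $\sigma$, hence a countable union of open subarcs, each of which lies entirely in one complementary component of $\Gamma$ (a subarc disjoint from $\cP(\sigma)$ cannot pierce $\Gamma$, and $\Gamma$ itself is not met except possibly at... — here one must be a little careful, since $\sigma$ can touch $\Gamma$ without piercing it; I would argue that a connected subset of $\sigma$ avoiding $\cP(\sigma)$ that meets $\Gamma$ must still lie in the closure of a single component, and then push it off $\Gamma$). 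The goal is to reduce the number of piercing points: if $\cP(\sigma)$ has at least two points, find two piercing points $x_i, x_j$ and a portion of $\sigma$ between consecutive visits, then replace a piece of $\sigma$ by a short arc inside one complementary component (which is open, hence arcwise connected, and one can choose the connecting arc to have small length) so as to eliminate at least one piercing point without creating new ones and without crossing $\Gamma$ on the removed piece except at the controlled points. Iterating — transfinitely or by a limiting argument if $\cP(\sigma)$ is infinite — one is left with a rectifiable arc crossing $\Gamma$ whose piercing set is a single point; finally pass to a simple subarc and rescale so the length is $\le 1$.

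I expect the main obstacle to be exactly this surgery step and the bookkeeping that controls it: one must ensure (a) that the replacement arcs can be taken with total length summing to something finite, so that in the infinite case the limiting arc is still rectifiable; (b) that the surgery genuinely decreases the piercing set and does not introduce new piercing points — this requires working in the open complementary components and using their local connectivity, but one has to worry about accumulation of the removed pieces and about $\sigma$ being tangent to $\Gamma$; and (c) in the countably-infinite case, organizing the infinitely many surgeries so that they converge. A clean way to package (c) is to note that the piercing points, being a countable subset of the compact set $\sigma$, can be enumerated and handled one at a time with geometrically decreasing length budgets $\epsilon/2^k$, and to check that the resulting sequence of arcs is Cauchy in the uniform metric with uniformly bounded length, hence converges to a rectifiable arc; then argue that in the limit the only possible piercing point is the ``first'' crossing, after which a simple subarc through that point is extracted. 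An alternative, perhaps cleaner, route that avoids transfinite recursion: show directly that a rectifiable arc cannot have a piercing set that is countably infinite with an isolated point — i.e. if $x \in \cP(\sigma)$ is isolated in $\cP(\sigma)$, then a small ball around $x$ meets $\cP(\sigma)$ only at $x$, so near $x$ the arc $\sigma$ pierces $\Gamma$ only at $x$; splicing the two ``ends'' of $\sigma$ near $x$ gives a pierceable configuration. Thus if $\Gamma$ is unpierceable, $\cP(\sigma)$ has no isolated points, is closed (which I would verify from the definition via a diagonal argument), hence is a perfect subset of $\bC$ and therefore uncountable; this gives $(1) \Rightarrow (5)$ directly and is likely the argument the author intends, with $(1) \Rightarrow (2) \Rightarrow (3)$ being the routine reductions and $(3) \Rightarrow (1)$ trivial. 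I would present the closedness and perfectness of $\cP(\sigma)$ as the heart of the matter.
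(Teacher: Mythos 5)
Your ``alternative, perhaps cleaner, route'' at the end is the paper's actual argument, and you are right that it is the one to commit to: show $\cP(\sigma)$ is closed and has no isolated points, hence is a nonempty perfect set and therefore uncountable. The paper organizes the cycle as $(1)\Rightarrow(2)\Rightarrow(3)$, $(5)\Rightarrow(1)$, $(4)\Rightarrow(5)$ (extract a simple subarc of the trace with the same endpoints, whose piercing set is contained in $\cP(\sigma)$), and then proves the substantive implication $(3)\Rightarrow(4)$ exactly by the perfect-set argument; closedness is obtained not by a ``diagonal argument'' but by the identity $\cP(\sigma)=\sigma(\overline{A}\cap\overline{B})$ with $A=\sigma^{-1}(\Omega_1)$ and $B=\sigma^{-1}(\Omega_2)$, which exhibits $\cP(\sigma)$ as a continuous image of a compact set. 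Two criticisms of the remainder of your write-up. First, the surgery/cut-and-paste scheme that occupies most of your proposal is unnecessary, and the difficulties you list for it (guaranteeing no new piercing points are created, accumulation of the modified pieces, convergence of infinitely many surgeries on a rectifiable arc) are real; had that been your only argument I would count them as genuine gaps rather than bookkeeping. Second, the claim that ``after an affine rescaling of the domain one may assume $l(\tilde\sigma)\le 1$'' is incorrect: reparametrizing the domain does not change the length of the arc, and rescaling the plane changes $\Gamma$. The correct reduction behind $(3)$ is to pass to a short simple subarc of $\sigma$ straddling the unique piercing point: since the arclength function of a rectifiable curve is continuous, such a subarc can be taken with arbitrarily small length while still crossing $\Gamma$ with the same singleton piercing set. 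With those two corrections, your final paragraph is a correct proof and coincides with the paper's.
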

\begin{proof}
It is easy to see that $(1) \Rightarrow (2) \Rightarrow (3)$ and obviously $(5) \Rightarrow (1)$. $(4) \Rightarrow (5)$ because if $\sigma$ is a rectifiable arc, there exists a simple rectifiable arc $\sigma'$ which is a subset of the trace of $\sigma$ with the same endpoints. So it is sufficient to show that $(3) \Rightarrow (4)$.

First observe that $\cP(\sigma)$ cannot have any isolated points, otherwise there exists a sub-curve $\sigma' \subset \sigma$ in a neighborhood of the isolated point that pierces $\Gamma$ at exactly one point. Since closed countable sets must have isolated points, it will be sufficient to show that $\cP(\sigma)$ is closed.

Let $\Omega_1$ be the bounded complementary component of $\Gamma$, and $\Omega_2$ be the unbounded complementary component of $\Gamma$.  Write $\sigma:[0,1] \ra \bC$. Let $A = \sigma^{-1}(\Omega_1)$ and $B = \sigma^{-1}(\Omega_2)$. $A$ and $B$ are disjoint open subsets of $[0,1]$. We claim that
\begin{equation}
\label{PiercingSet}
\cP(\sigma) = \sigma(\overline{A} \cap \overline{B}).
\end{equation}

If $x \in \cP(\sigma)$, there exists sequences $z_n \subset \sigma \cap \Omega_1$ and $w_n \subset \sigma \cap\Omega_2$ so that $z_n \ra x$ and $w_n \ra x$. Since $\sigma$ is simple, if $\sigma(t) = x$, there are corresponding sequences $t_n$ and $t_n'$ so that $z_n = \sigma(t_n)$, $w_n = \sigma (t_n')$, and both $t_n, t_n' \ra t$. So $t \in \overline{A} \cap \overline{B}$, and therefore $\sigma(t) = x \in \sigma(\overline{A} \cap \overline{B})$.

If $x \notin \cP(\sigma)$, then there exists an $\epsilon > 0$ so that either $B(x,\epsilon) \cap \sigma$ does not contain any points in $\Omega_1$, or $B(x,\epsilon) \cap \sigma$ does not contain any points in $\Omega_2$. If $B(x,\epsilon) \cap \sigma$ does not contain any points in $\Omega_1$, and if $\sigma(t) = x$, then there exists a $\delta > 0$ so that $\sigma (t-\delta,t+\delta) \subset B(x,\epsilon/2)$ which implies that $\sigma(t-\delta,t+\delta) \cap \Omega_1$ is empty. It follows that $t \notin \overline{B}$, and since $\sigma$ is simple it follows that $x \notin \sigma(\overline{A} \cap \overline{B})$. The argument is exactly the same if $B(x,\epsilon) \cap \sigma$ does not contain any points in $\Omega_2$. Therefore, (\ref{PiercingSet}) holds.

Since $\overline{A} \cap \overline{B}$ is compact and $\sigma$ is continuous, $\cP(\sigma)$ is compact and therefore must be closed. This proves the claim. 
\end{proof}


In fact, it is not difficult to see additionally that $\cP(\sigma)$ is a Cantor set: compact, uncountable, totally disconnected, and no isolated points. 

\section{Plumbing}
The curve $\Gamma$ we construct will be the nested intersection of a sequence of topological annuli which we call \textit{plumbings}. We first define the two basic pieces that determine a plumbing. See Figure \ref{PlumbingPieces}.
 
\begin{figure}[!h]
	\centerline{ \includegraphics[height=1in,width=5in]{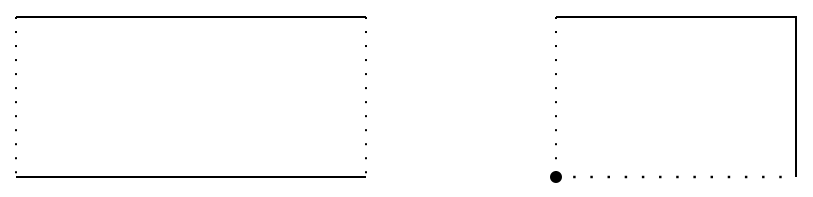} }
	\caption{Plumbings are constructed by alternating straight pieces and corner pieces, and gluing them together by arc length. The boundary sides are filled in and the openings are denoted by the dotted lines.}
	\label{PlumbingPieces}
\end{figure}

To form a \textit{straight piece}, start with a rectangle with sides parallel to the coordinate axes, and remove two of the opposite sides. We call the remaining sides the \textit{boundary sides} of the straight piece and the sides we removed the \textit{openings} of the straight piece. We call the boundary side that is a subset of the interior boundary component of the plumbing the \textit{top} of the straight piece, and the boundary side which is a subset of the exterior boundary component of the plumbing the \textit{bottom} of the straight piece. Whenever we need to analyze a specific straight piece, we will orient our point of view to justify these names. 

\textit{Corner pieces} are formed by taking an axis aligned rectangle and removing two adjacent sides, but keeping the mutual vertex of the two removed sides. This vertex is called an \textit{inner corner} for the corner piece, and the remaining sides are similarly called \textit{boundary sides} and \textit{openings}.  Plumbings are formed by alternating straight and corner pieces and gluing together their openings by arc length. See Figure \ref{Example} for an example of a plumbing.
\begin{definition}
A \textit{plumbing} $P$ is a closed topological annulus obtained by gluing together straight pieces and corner pieces so that between every two corner pieces there exists exactly one straight piece. 
\end{definition}

\begin{figure}[!h]
	\centerline{ \includegraphics[height=2in,width=5in]{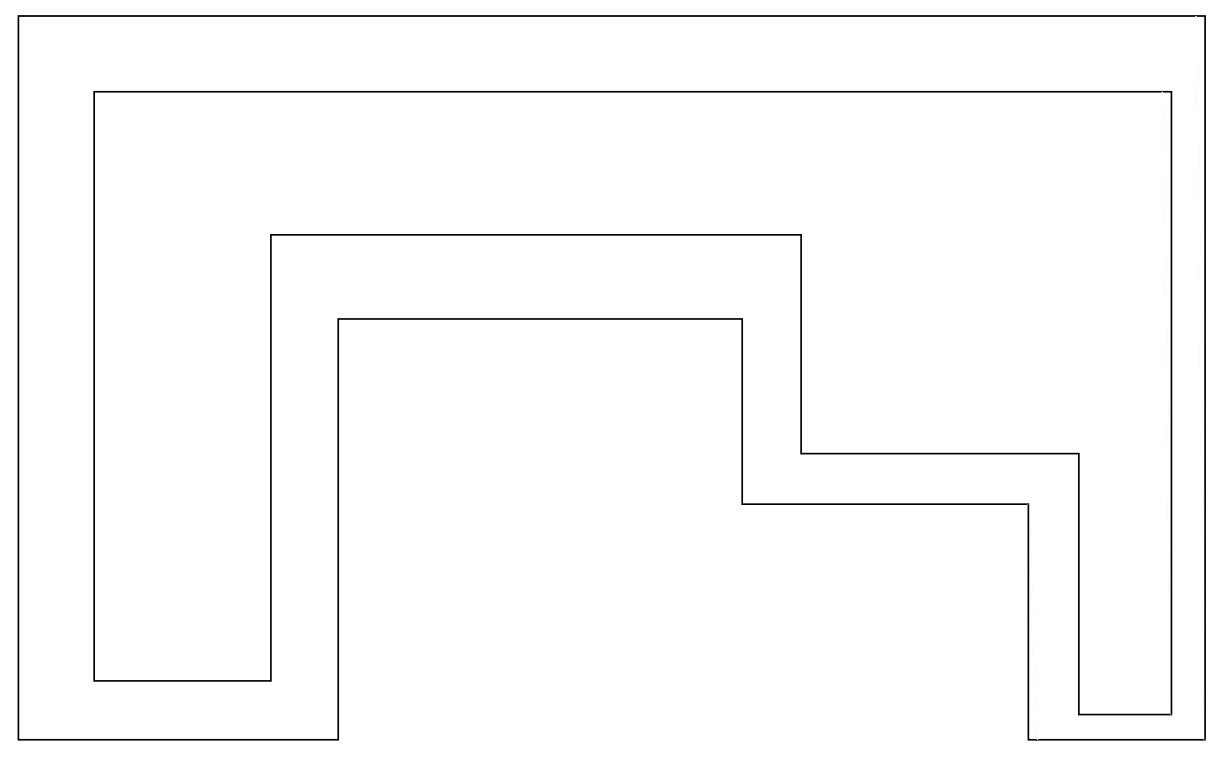} }
	\caption{An example of plumbing.}
	\label{Example}
\end{figure}

We define the following geometric quantities associated to a plumbing. The \textit{width} of a straight piece is defined to be the distance between its boundary sides. The \textit{width} of a plumbing is defined to be the maximum of the widths of the straight pieces in the plumbing. The \textit{length} of a straight piece is the distance between its openings. The \textit{min-length} of a plumbing is the minimum of the lengths of its straight pieces.

\begin{figure}[!h]
	\centerline{ \includegraphics[height=2in,width=5in]{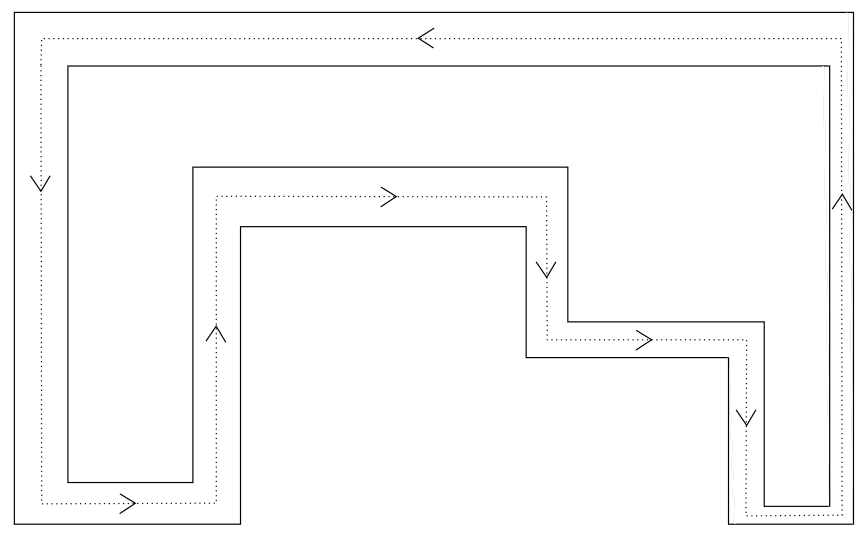} }
	\caption{A plumbing with its core curve in the middle, oriented counterclockwise.}
	\label{foliation}
\end{figure}

Plumbings come with a natural foliation of axis aligned polygonal curves that follow the inner and outermost boundary. Start with a plumbing $P$, and let $\gamma_0$ and $\gamma_1$ denote the outermost and innermost boundary components, respectively. Decompose $P$ into its straight and corner pieces. Given a straight piece $S$ and a number $t \in (0,1)$, let $w$ denote its width. Let $s_t$ denote the line segment parallel to the boundary sides of $S$ with distance $t \cdot w$ from $\gamma_0$. Draw this line segment for all straight pieces in $P$. Any corner piece is adjacent to two straight pieces, and each of its openings contains an endpoint of some $s_t$ and some $s_t'$. Continue the segments $s_t$ and $s_t'$ until they intersect (this point of intersection will be on the diagonal that separates the openings of the corner piece) and repeat this for all corner pieces to form a Jordan curve $\gamma_t$. We define $\gamma_{1/2}$ to be the \textit{core curve} of $P$. We call $\{\gamma_t\}$ the \textit{plumbing foliation} of $P$. Whenever we parameterize the elements of a plumbing foliation, we do so counterclockwise, so that the inner boundary component is always to the left of the curve in the foliation. See Figure \ref{foliation}.

\begin{figure}[!h]
	\centerline{ \includegraphics[height=2in,width=5in]{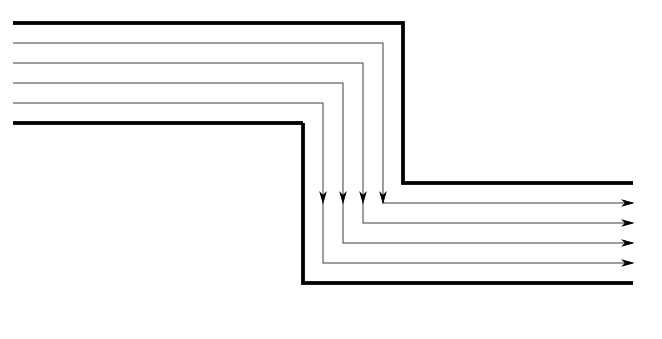} }
	\caption{An illustration of some elements of the plumbing foliation, with orientation.}
	\label{foliationex}
\end{figure}
.

A point on the core curve of $P$ where the core curve changes from a vertical segment to a horizontal segment or vice versa is called a $\textit{bend point}$. Two bend points are called \textit{adjacent} if there is no bend point between them. Every bend point is adjacent to exactly two other bend points. Let $z$ and $w$ be two adjacent bend points of $P$, so that the core curve passes through $z$ first. The \textit{$(z,w)$-junction} is the union of the two corner pieces containing $z$ and $w$, and the straight piece that is between $z$ and $w$. Each $(z,w)$ junction takes two possible forms, depending on how the openings of the corner pieces are configured. We call the junctions $U$-junctions and $Z$-junctions, respectively. When considered with respect to the orientation of the core curve, there are four junctions to consider. See Figure \ref{Junctions}.
\begin{figure}[!h]
	\centerline{ \includegraphics[height=5in,width=5in]{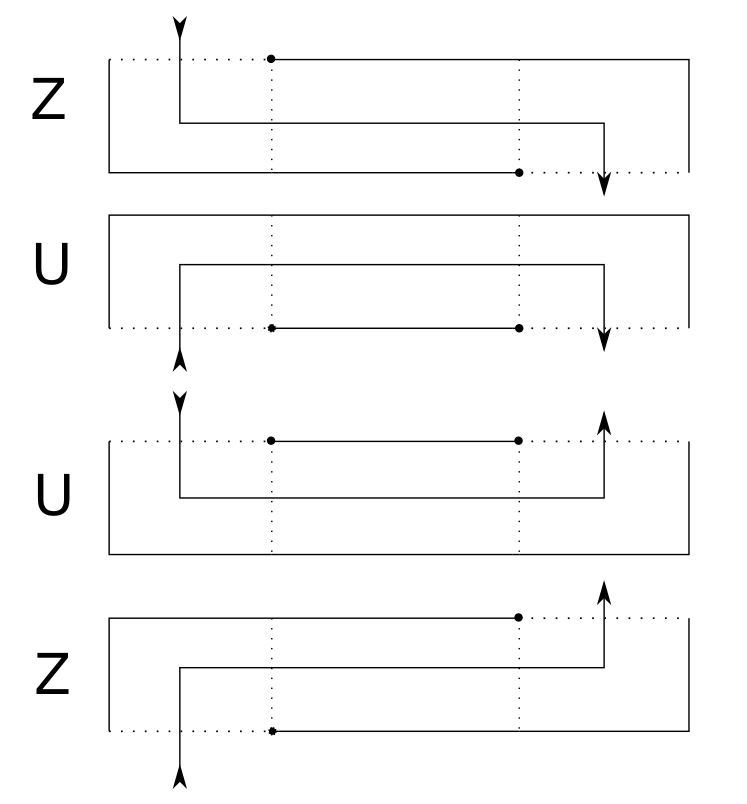} }
	\caption{Accounting for orientation, there are two Z-junctions and two U-junctions. The pictures have been oriented so that the top boundary side of the straight piece is a subset of the innermost boundary of the plumbing. In fact, since a plumbing is axis aligned, we can always orient our view of a straight piece or junction in this way.}
	\label{Junctions}
\end{figure}

\section{Creating New Plumbing from Old}
In this section we define a procedure which takes a plumbing $\Gamma_n$ and creates a plumbing $\Gamma_{n+1}$ contained inside of $\Gamma_n$ taking up most of the area of $\Gamma_n$, but having a much smaller width. Roughly put, we will insert very thin vertical and horizontal rectangles segments inside of $\Gamma_n$ to form a new plumbing. This procedure will form the basis of our construction.

Let $\Gamma_n$ be a plumbing. Define 
\begin{equation}
d_n := \min\{l_n, w_n\}
\label{dn}
\end{equation}
where $l_n$ is the min-length of $\Gamma_n$ and $w_n$ is the width of $\Gamma_n$. Subdivide $\Gamma_n$ into straight pieces and corner pieces, and denote the plumbing foliation of $\Gamma_n$ by $\{\gamma^n_t\}$. Parameterize $\gamma^n_{1/2}$ as a Jordan curve, say, by arc length. Denote $P_0$ as the unique plumbing piece of $\Gamma_n$ so that $\gamma^n_{1/2}(0) \in P_0$ and so that there exists $\delta >0$ so that $\gamma^n_{1/2}:[0,\delta) \ra P_0$. Label the rest of the plumbing pieces $\{P_i\}_{i=0}^{m-1}$ according to the order $\gamma^n_{1/2}(t)$ passes through them. We will consider this list of pieces modulo $m$, so that $P_m = P_0$. 

Choose some $\delta_n \in (0,1/100)$. We are going to subdivide the straight pieces $P_i$ further into rectangles by connecting the boundary sides by line segments placed distance approximately $\delta_n d_n$ apart; no line segment will be farther than $2\delta_n d_n$ from the adjacent line segments we place, and no line segment will be closer than $1/2 \delta_n d_n$ from the adjacent line segments we place. We will simply refer to these new rectangles as \textit{subdivided rectangles}. If $P_i$ is the straight piece of a $U$-junction, we must make sure that the number of subdivide rectangles of $P_i$ is even. If $P_i$ is an element of a $Z$-junction, the number of subdivided rectangles should be odd. See Figure \ref{combinatorics}.

\begin{figure}[!h]
	\centerline{ \includegraphics[height=2in,width=5in]{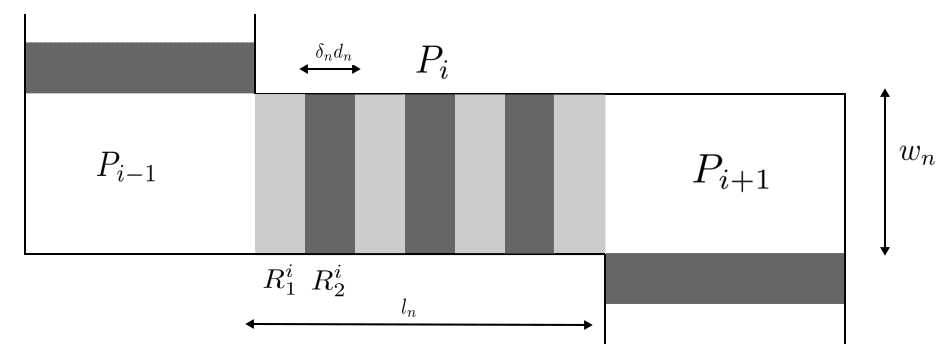} }
	\caption{Illustrated above is a $Z$-junction, with pieces $P_{i-1},P_i$ and $P_{i+1}$. Since the straight piece is in a $Z$-junction, we subdivide $P_i$ into an odd number of rectangles. Since the boundary sides of $P_{i-1}$ extend to the bottom boundary side of $P_i$, the first subdivided rectangle will be labeled as `T'.  The light gray color coincides with rectangles labeled as `T' and the dark gray color coincides with rectangles labeled as `B'. In practice, $\delta_n$ is chosen so that $P_i$ is broken into at least 100 rectangles.}
	\label{combinatorics}
\end{figure}

Given any $t_n \in \bN$, we select the elements of the plumbing foliation $\{\gamma^n_{j/t_n}\}_{j=0}^{t_n}.$ We will always assume that $t_n$ is an odd number, so that we are selecting $t_n + 1$ equally spaced elements of the foliation. If $w_n$ is the width of $\Gamma_n$, then adjacent elements of the plumbing foliation inside of a straight piece are no more than distance $\frac{w_n}{t_n}$ apart.



For each straight piece $P_i$, we want to assign a labels $T$ and $B$ for its subdivided rectangles.
\begin{enumerate}
	\item If $P_{i-1}$'s boundary side extends the bottom boundary side of $P_i$, the first subdivided rectangle is marked as T.
	\item If $P_{i-1}$'s boundary side extends the upper boundary side of $P_i$, the first subdivided rectangle is marked as B.
\end{enumerate}
We alternate T and B until we reach the end of the straight piece.

Given a straight piece $P$, we have $P = \cup R_i$, where $R_i$ are the subdivided rectangles of $P$ described above with base approximately $\delta_n d_n$ and with height no greater than $w_n$. Choose some positive integer $v_n = k^2$ for some $k \geq 3$ and then choose some subdivided rectangle $R_i$. Decompose the base of $R_i$ into $v_n$ many equal length segments. We will need the following definition and combinatorial fact.

\begin{definition}
Split the unit square a $v_n$ by $v_n$ square grid. We define a \textit{rook placement} to be a selection of exactly $v_n$ many squares, which we call \textit{rooks}, in the grid so that:
\begin{enumerate}
	\item Each row in the grid contains exactly one rook.
	\item Each column in the grid contains exactly one rook.
\end{enumerate}
\end{definition}

The following lemma is simple, and we leave its proof to the reader, with Figure \ref{RookPlacement} as a hint. A rook placement satisfying the conditions of Lemma \ref{rook} is called a \textit{good $v_n$-rook placement}.

\begin{lemma}
	\label{rook}
Let $v_n = k^2$ for some integer $k \geq 3$, and suppose the unit square has been decomposed into a $v_n$ by $v_n$ square grid. Identify the sides of the unit square to create a torus. Then there exists a rook placement of the unit square so that for every rook $Q$, the $8$ adjacent squares, viewed on the torus, do not contain any other rooks in the rook placement.
\end{lemma}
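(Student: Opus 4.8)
The plan is to encode a rook placement as a permutation and reduce the torus‑adjacency condition to a purely arithmetic statement about its consecutive differences. Write $N := v_n = k^2$ and identify both the rows and the columns of the grid with $\bZ/N\bZ$. A rook placement is then the same data as a permutation $\pi$ of $\bZ/N\bZ$, with rooks in the cells $(i,\pi(i))$; conditions (1) and (2) in the definition of a rook placement hold automatically. On the torus, the cell $(i',j')$ is one of the eight squares adjacent to $(i,j)$ precisely when $i'-i\in\{-1,0,1\}$ and $j'-j\in\{-1,0,1\}$ in $\bZ/N\bZ$ and $(i',j')\neq(i,j)$. Since $\pi$ is a bijection, two distinct rooks never share a row or a column, so the only way the rooks $(i,\pi(i))$ and $(i',\pi(i'))$ with $i\neq i'$ can be mutually adjacent is if $i' = i\pm 1$ and $\pi(i')-\pi(i) = \pm 1$ in $\bZ/N\bZ$. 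Hence it suffices to exhibit a permutation $\pi$ of $\bZ/N\bZ$ with
\[
\pi(i+1)-\pi(i)\ \not\equiv\ 0,\ \pm 1 \pmod N \qquad\text{for every } i\in\bZ/N\bZ,
\]
the index being read cyclically, so that the case $i=N-1$ is the torus wrap‑around (the congruence $\not\equiv 0$ is already free).

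For this I would take the linear permutation $\pi(i) := (k+1)\,i \bmod N$, which geometrically is just the slope‑$(k+1)$ diagonal rook pattern on the torus. It is a bijection of $\bZ/N\bZ$ because $\gcd(k+1,k^2)=\gcd(k+1,k)=1$, so $k+1$ is a unit modulo $N$. Every consecutive difference is the constant $\pi(i+1)-\pi(i)=k+1$, so the entire verification collapses to checking $k+1\not\equiv 0,\pm 1\pmod{k^2}$: the residue $k+1$ is a unit, so it is $\not\equiv 0$; $k+1\equiv 1$ would force $k^2\mid k$, impossible since $0<k<k^2$; and $k+1\equiv -1$ would force $k^2\mid k+2$, impossible because $0<k+2<k^2$ for $k\ge 3$ (equivalently $(k-2)(k+1)>0$). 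This last point is exactly where the hypothesis $k\ge 3$ enters, and it is sharp: for $k=2$ one has $k+1\equiv -1\pmod 4$, and in fact the statement fails outright in that case.

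There is no genuinely hard step here; the only thing requiring care is the torus wrap‑around between rows $N-1$ and $0$, which is precisely why a difference that is constant all the way around is the convenient choice (e.g.\ the base‑$k$ digit‑swap permutation almost works but creates an adjacent pair at the wrap). More generally any unit $c$ of $\bZ/N\bZ$ with $c\not\equiv\pm 1$ would serve equally well, and such a $c$ exists whenever $\phi(N)\ge 3$, which holds here since $\phi(k^2)=k\,\phi(k)\ge 6$ for $k\ge 3$; the explicit choice $c=k+1$ just pins down the placement and matches Figure \ref{RookPlacement}.
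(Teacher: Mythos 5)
Your proof is correct. The paper does not actually supply an argument here — it explicitly leaves the proof to the reader, with Figure \ref{RookPlacement} as the only hint — so your write-up is a complete justification of what the paper merely illustrates, and the linear permutation $\pi(i)=(k+1)i \bmod k^2$ is exactly the kind of constant-slope diagonal pattern the figure depicts. The reduction to checking $k+1\not\equiv 0,\pm1\pmod{k^2}$ is sound (two distinct rooks can only be torus-adjacent via consecutive rows and a column difference of $\pm1$), the coprimality check $\gcd(k+1,k^2)=1$ is right, and your remarks about where $k\ge 3$ is needed and why the digit-swap permutation fails at the wrap-around are accurate.
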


\begin{figure}[!h]
	\centerline{ \includegraphics[height=3in,width=3in]{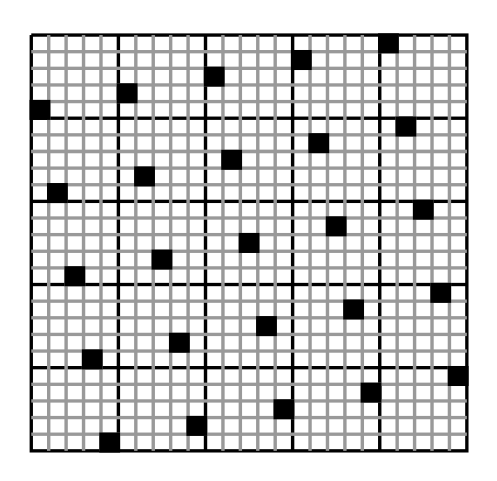} }
	\caption{A good $v_n$-rook placement for $v_n = 5^2$.}
	\label{RookPlacement}
\end{figure}

Supose that $R_i$ is a subdivided rectangle of some straight piece, and that $R_i$ is designated as T. The plumbing foliation cuts $R_i$ into $t_n$ many rectangles. Orienting our point of view and counting downwards from the boundary side intersecting the interior complementary component, we call the first $2v_n$ many rectangles a \textit{$v_n$-slab} inside $R_i$.  A $v_n$-slab decomposes into a $v_n$ by $v_n$ grid. Rows are determined by every two elements of the plumbing foliation, and the columns are determined by the $v_n$ equally spaced points on the base of $R_i$.  We choose a good $v_n$-rook placement for this grid. For each rook in this good $v_n$-rook placement, place a vertical line segment connecting the top and bottom of the rook through the middle. This segment has endpoints on two elements of the plumbing foliation for $\Gamma_n$ that determine the rows, and passes through exactly one foliation element between the two. See Figure \ref{vnblock}.

\begin{figure}[!h]
	\centerline{ \includegraphics[height=4in,width=6in]{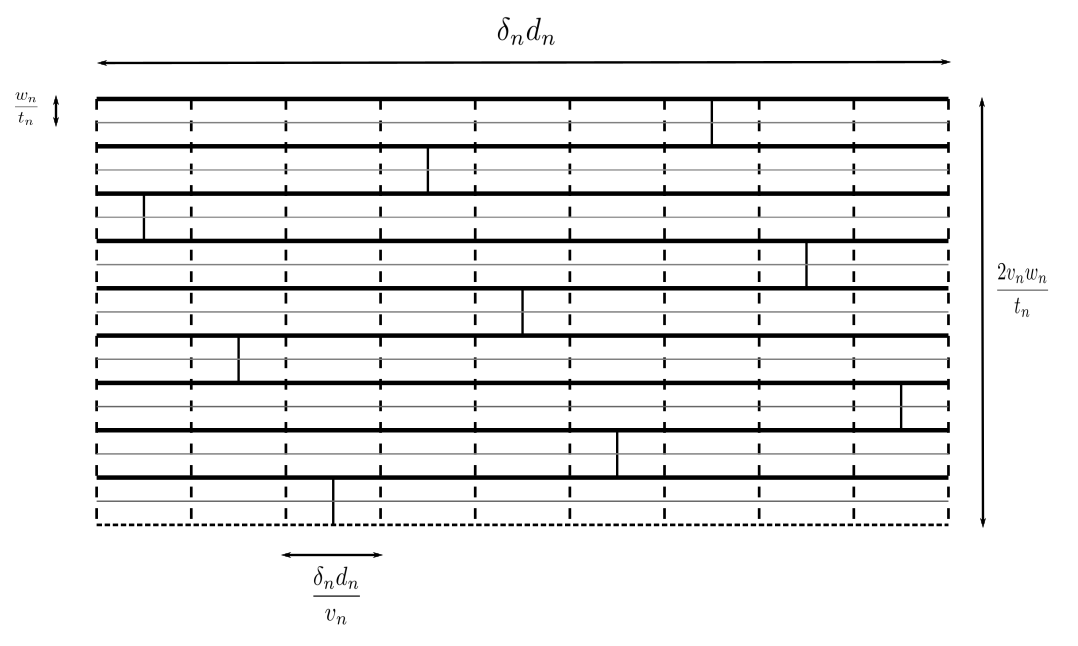} }
	\caption{A good $v_n$-rook placement for a $v_n$-slab. Each combinatorial square has base $\delta_n d_n/v_n$ and height $2w_n/t_n$. The $v_n$-slab the rook placement is inside has width $\delta_n d_n$ and height $2v_nw_n/t_n$. The foliation elements that determine the rows of the $v_n$ slab are colored darker than the ones that are skipped and that the vertical segments intersect.}
	\label{vnblock}
\end{figure}

We can split the rest of $R_i$ into $v_n$ slabs, going from the top down and repeating the rook placement of the top $v_n$-slab until we reach the element $\gamma^n_{1/t_n}$ of the plumbing foliation. We can choose $t_n$ to be larger if we need to so that the final $v_n$-slab has a bottom side determined by $\gamma^n_{1/t_n}$; we will always assume that $t_n$ satisfies this property.

One $v_n$ slab forms a rectangle with base length approximately $\delta_n d_n$ and height $2v_n w_n/t_n$. In our construction, the height $2v_nw_n/t_n$ will be much smaller than $\delta_n d_n$. The amount of $v_n$ slabs we need to stack to form an approximate $\delta_n d_n$ by $\delta_n d_n$ \textit{$v_n$-square} is approximately $k_n$, where,
$$\frac{2v_n w_n}{t_n} \cdot k_n = \delta_n d_n.$$
Or,
\begin{equation}
\label{kn}
k_n = \frac{\delta_n d_n t_n}{2 v_n w_n}.
\end{equation}
We also can see that $R_i$ decomposes into approximately
\begin{equation}
\label{lambdan}
\lambda_n = \frac{w_n}{\delta_n d_n}
\end{equation}
many $v_n$-squares.  

We do the same thing with the rectangles labeled $B$, except instead of building $v_n$-slabs starting from the top down, we build $v_n$-slabs starting from the bottom up. We do not place any segments connecting elements of the plumbing foliation in corner pieces. 

We now draw line segments along the plumbing foliation. In corner pieces, we add the intersection of all elements in the plumbing foliation with the corner piece. See Figure \ref{CornerFoliation}. In straight pieces, we do the same thing, except we remove the portion of the plumbing foliation within distance $w_n/t_n$ of the center of vertical line segments we placed in the previous step. See Figure \ref{RookSlab}. 

\begin{figure}[!h]
	\centerline{ \includegraphics[height=3in,width=4in]{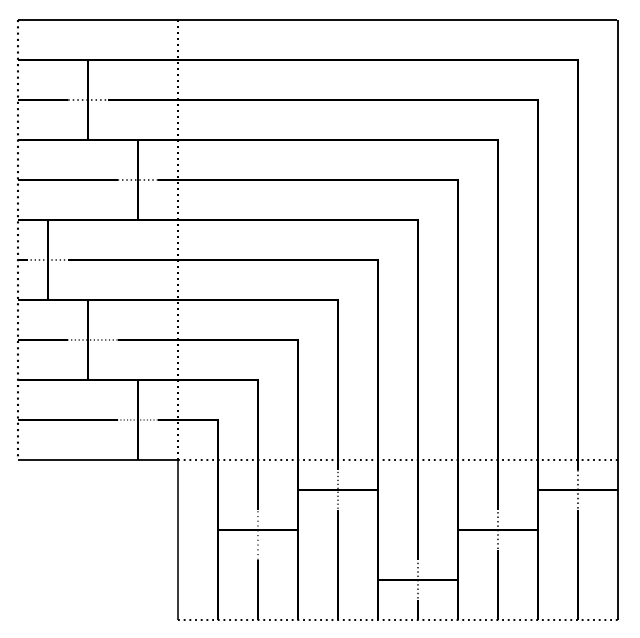} }
	\caption{In the corner pieces of the plumbing, we just fill in the elements of the plumbing foliation.}
	\label{CornerFoliation}
\end{figure}


With the procedure above of adding in vertical and horizontal line segments into straight and corner pieces, we have constructed a topological subannulus contained inside of $\Gamma_n$. The only issue remaining is that its boundary components are not Jordan curves. To do this, we will thicken all of the segments in the horizontal and vertical directions by a small amount so that the segments we added become rectangles. The amount we thicken is given by
\begin{equation}
\label{etan}
\eta_n := \frac{1}{100^n}\frac{w_n}{\prod_{j=1}^n t_j}.
\end{equation}
The following theorem is now clear:

\begin{theorem}
$\Gamma_{n+1}$ is a plumbing compactly contained in $\Gamma_n$.
\end{theorem}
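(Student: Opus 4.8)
The plan is to verify the three assertions packed into the statement: $\Gamma_{n+1}$ is (i) a plumbing, (ii) compactly contained in $\Gamma_n$, and (iii) has boundary components that are genuine Jordan curves. Since the author calls this ``now clear,'' the proof should be a careful bookkeeping argument organized around the construction of Section 4, not a new idea.

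First I would argue (iii), that $\partial \Gamma_{n+1}$ consists of two Jordan curves. The object built before the thickening step is a union of segments lying along the plumbing foliation (horizontal portions) and vertical segments coming from rook placements; after thickening each segment by $\eta_n$ in the transverse direction (with $\eta_n$ as in (\ref{etan})) each segment becomes a thin rectangle. The key point is that $\eta_n$ is chosen small enough—smaller than $w_n/\prod t_j$ by a factor $100^{-n}$, hence much smaller than the spacing $w_n/t_n$ between consecutive foliation elements and much smaller than the horizontal spacing $\delta_n d_n / v_n$ between rook columns—that distinct thickened rectangles meet only in the ``intended'' way: a horizontal rectangle along a foliation element meets a vertical rectangle of a rook segment exactly at the crossings dictated by the rook placement, and the removal of the foliation within distance $w_n/t_n$ of each vertical segment's center (the step near Figure~\ref{RookSlab}) guarantees these crossings are made into clean overlaps rather than creating extra components or pinch points. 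One then checks that the resulting region is a closed topological annulus: its inner boundary follows the innermost foliation curve $\gamma^n_1$ (perturbed by $\eta_n$) and its outer boundary follows $\gamma^n_0$; the weaving vertical rectangles modify these boundary curves locally but, because in each $v_n$-slab the rook placement has the no-adjacent-rooks property of Lemma~\ref{rook}, the detours never collide, so each boundary component remains a simple closed curve.

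Next, (i): that $\Gamma_{n+1}$ is a plumbing in the sense of the definition, i.e. it decomposes into axis-aligned straight pieces and corner pieces with exactly one straight piece between consecutive corner pieces. This is essentially a matter of identifying the decomposition: each thickened horizontal foliation-segment inside a straight piece of $\Gamma_n$ becomes a straight piece of $\Gamma_{n+1}$, each thickened vertical rook-segment becomes a straight piece, each thickened foliation-segment inside a corner piece of $\Gamma_n$ contributes corner pieces where the foliation turns, and new corner pieces appear at every junction between a horizontal run and a vertical rook-segment. The parity conditions imposed during subdivision (even number of subdivided rectangles in a $U$-junction straight piece, odd in a $Z$-junction—see Figure~\ref{combinatorics}) together with the alternating $T/B$ labeling are exactly what make the core curve of the new annulus close up consistently and alternate straight/corner correctly; so here I would just point to those combinatorial choices and observe they force the ``one straight piece between two corner pieces'' property.

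Finally, (ii): compact containment, $\Gamma_{n+1} \subset \subset \Gamma_n$. Every segment placed in the construction lies on the plumbing foliation $\{\gamma^n_t\}$ with $t \in [1/t_n, 1]$ or on vertical segments inside $v_n$-slabs which sit strictly between $\gamma^n_0$ and $\gamma^n_1$ by construction (the slabs are counted inward from a boundary side but the foliation element $\gamma^n_{1/t_n}$, not $\gamma^n_0$, bounds the last slab, and symmetrically on the other side). After thickening by $\eta_n$, which is far smaller than $w_n/t_n$, everything stays in the open annulus bounded by $\gamma^n_0$ and $\gamma^n_1$—that is, in the interior of $\Gamma_n$—so the closure of $\Gamma_{n+1}$ is a compact subset of $\mathrm{int}(\Gamma_n)$. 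The main obstacle, and the only place real care is needed, is (iii): making sure the thickening does not accidentally merge rectangles that should stay disjoint nor disconnect the boundary, and this is precisely controlled by the smallness of $\eta_n$ and by the no-adjacent-rooks conclusion of Lemma~\ref{rook}, so I would spell out those two quantitative comparisons explicitly and leave the rest as the routine verification the author intends.
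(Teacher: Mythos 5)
The paper offers no proof here at all --- it simply asserts that the theorem ``is now clear'' from the construction --- and your verification is exactly the routine bookkeeping the author is invoking: the smallness of $\eta_n$ relative to $w_n/t_n$ and $\delta_n d_n/v_n$, the no-adjacent-rooks property of Lemma~\ref{rook}, the parity/labeling conventions forcing the new core curve to close up, and the fact that all inserted segments lie strictly between the boundary foliation elements. Your proposal is correct and is the intended expansion of the paper's (omitted) argument.
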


The core curve of $\Gamma_{n+1}$ can be visualized in Figure \ref{NewCoreCurve}.

\begin{figure}[!h]
	\centerline{ \includegraphics[height=4in,width=6in]{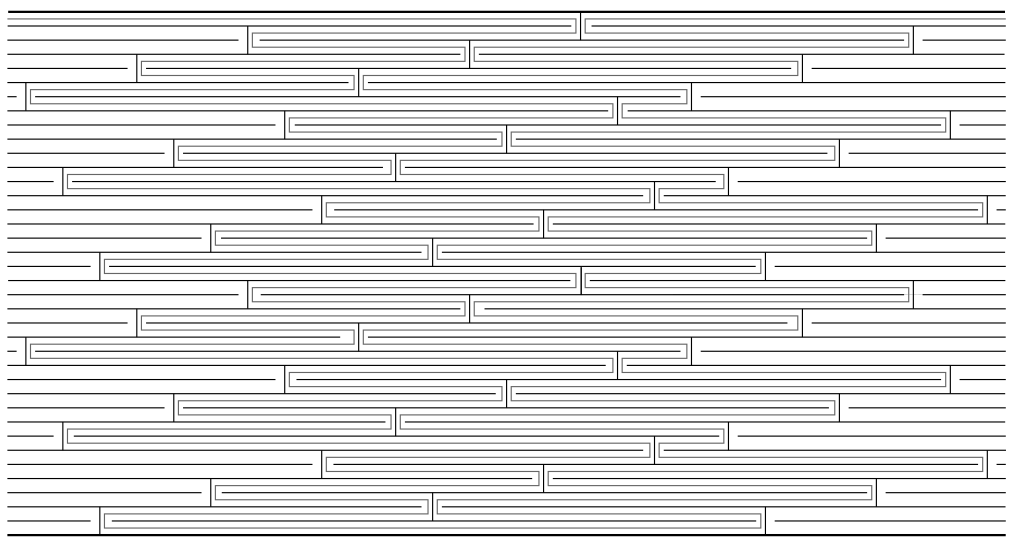} }
	\caption{A visualization of $\Gamma_{n+1}$ inside of a straight piece, with its core curve in gray. With this point of view, the core curve enters from the top left hand corner, weaves back and forth between vertical segments we place down to the bottom, and then weaves back up. This picture was constructed with one $v_n$-square made out of $2$ $v_n$-slabs. In practice, the number of $v_n$-slabs and $v_n$-squares tends to infinity.}
	\label{NewCoreCurve}
\end{figure}

\section{The Limiting Object is a Jordan curve}
We now construct the Jordan curve $\Gamma$. Fix $\epsilon_0 > 0$ and let $\Gamma_0$ be the topological annulus formed by taking the open square of side length $2(1+\epsilon_0)$ centered at the origin and removing the closed square of side length $2(1-\epsilon_0)$ centered at the origin. $\Gamma_0$ is a square plumbing, and parameterize its core curve using a constant speed parameterization. Using the procedure in the previous section, we construct a plumbing with the following parameters. At each stage $n$, we must choose a parameter $v_n$. We assume that $v_n$ is strictly increasing, and we always choose $v_n$ to be a perfect square. Moreover, we will demand that
\begin{equation}
\label{vn}
\sum_{n=1}^{\infty} \frac{1}{v_n} < \frac{1}{100}.
\end{equation}
We will always assume that the parameters $\delta_n$ satisfy
\begin{equation}
\label{deltan}
\sum_{n=1}^{\infty} \delta_n < \frac{1}{100}.
\end{equation}

Notice that for any valid choice of $t_n$, the widths $w_n$ will always satisfy the inequality
\begin{equation}
\label{wntn}
w_{n+1} \leq \frac{w_n}{t_n}.
\end{equation}
Given these above quantities, we will always choose $t_n$ large enough so that it satisfies
\begin{equation}
\label{biggerthanone}
\frac{t_n}{v_n} \cdot \frac{(\delta_n d_n)^2}{2w_n} \geq 1
\end{equation}
By (\ref{vn}) and (\ref{biggerthanone}) we know that 
\begin{equation}
\label{tn}
\sum_{n=1}^{\infty} \frac{1}{t_n} < \frac{1}{100}.
\end{equation}
From this it easily follows that 
\begin{equation}
\label{wnconverges}
\sum_{n=1}^{\infty} w_n < \infty.
\end{equation}

In this section, we will prove the following:

\begin{theorem}
\label{JordanCurve}
With all the parameters defined as above, 
$$\Gamma = \bigcap_{n=0}^{\infty} \Gamma_n$$
is a Jordan curve. 
\end{theorem}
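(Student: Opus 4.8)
The plan is to show that $\Gamma = \bigcap_n \Gamma_n$ is a Jordan curve by exhibiting it as the image of an injective continuous map from $S^1$. Since each $\Gamma_n$ is a plumbing, hence a closed topological annulus, and since $\Gamma_{n+1}$ is compactly contained in $\Gamma_n$, the intersection $\Gamma$ is a nonempty compact connected set; the work is to promote this to a Jordan curve. The natural parametrization to use is the one supplied by the core curves: for each $n$, let $\gamma^n_{1/2}\colon S^1 \to \Gamma_n$ be the constant-speed parametrization of the core curve of $\Gamma_n$, oriented counterclockwise. First I would establish that these parametrizations are \emph{compatible up to small error}: the construction of $\Gamma_{n+1}$ from $\Gamma_n$ places $\Gamma_{n+1}$ inside $\Gamma_n$ so that the core curve of $\Gamma_{n+1}$ follows the core curve of $\Gamma_n$ closely, weaving within the thin slabs and junctions but never leaving an $O(w_n)$-neighborhood of $\gamma^n_{1/2}$. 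The key quantitative input is that the width $w_n \to 0$ rapidly — indeed $\sum w_n < \infty$ by (\ref{wnconverges}) — together with the thickening parameter $\eta_n$ in (\ref{etan}), which is summably small. I would therefore reparametrize so that $\gamma^{n+1}_{1/2}$ and $\gamma^n_{1/2}$ (suitably reparametrized by a homeomorphism of $S^1$) differ in sup-norm by a quantity bounded by a constant times $w_n$, using the fact that any point of $\Gamma_{n+1}$ lies within distance $C w_n$ of $\Gamma_n$'s core curve and the foliation structure lets one match parameters along the weaving.

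Granting the sup-norm estimate $\|\gamma^{n+1}_{1/2}\circ h_{n+1} - \gamma^n_{1/2}\circ h_n\|_\infty \le C w_n$ for appropriate homeomorphisms $h_n$ of $S^1$, the sequence $f_n := \gamma^n_{1/2}\circ h_n$ is uniformly Cauchy in $C(S^1,\bC)$ because $\sum_n w_n < \infty$, so it converges uniformly to a continuous map $f\colon S^1 \to \bC$. The image of $f$ is contained in $\overline{\Gamma_n}$'s core region for every $n$, hence in $\bigcap_n \Gamma_n = \Gamma$; conversely, a diameter/nesting argument shows every point of $\Gamma$ is a limit of core-curve points, so $f(S^1) = \Gamma$. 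It remains to prove $f$ is injective. Here I would use the annulus structure: for each $n$, $\gamma^n_{1/2}$ separates the inner boundary $\gamma^n_0$ from the outer boundary $\gamma^n_1$ of the plumbing $\Gamma_n$, and the winding/degree of $f_n$ around a point in the bounded complementary component is $1$. Injectivity in the limit should follow from a \emph{uniform transversality} or \emph{no-backtracking} property: distinct points $s \ne t \in S^1$ that are not too close get separated already at some finite stage $n$ (their images under $f_n$ lie in different plumbing pieces, which are at definite distance $\ge$ min-length $l_n$ apart, and the later perturbations, of total size $\sum_{m\ge n} w_m \ll l_n$, cannot bring them back together), while points $s,t$ that are very close are handled by the local structure of the weaving, which is an embedding at each finite stage.

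The main obstacle I anticipate is precisely this injectivity step for \emph{nearby} parameters — ruling out that the limit curve develops a self-tangency or self-crossing where the core curve of $\Gamma_{n+1}$ weaves back and forth through the narrow vertical segments and the $U$- and $Z$-junctions. At each finite stage the core curve is a polygonal Jordan curve, so the danger is only in the limit, and the delicate point is that consecutive "back-and-forth" strands inside a $v_n$-slab are separated by the foliation spacing $w_n/t_n$, while the \emph{cumulative} future displacement of each strand is controlled by $\eta_n + \sum_{m>n} w_m$; the definition (\ref{etan}) of $\eta_n$, with its factor $100^{-n}/\prod_{j\le n} t_j$, is evidently rigged so that this future displacement is far smaller than the within-stage separation $w_n/t_n \ge \eta_n \cdot 100^n$, preventing strands from ever touching. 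I would organize this as: (i) fix $n$ and two parameters landing in the same straight piece of $\Gamma_n$; (ii) show their $\Gamma_{n+1}$-images stay in disjoint closed tubes of width $\sim \eta_n$ around distinct polygonal strands, separated by gaps of size $\gtrsim w_n/t_n$; (iii) show all subsequent refinements keep the images inside those tubes (total drift $\le \sum_{m>n}(\eta_m + w_m) \ll w_n/t_n$); (iv) conclude the limit images are distinct. Combined with the easy separation of far-apart parameters and the $S^1 \to \Gamma$ surjectivity, this yields that $f$ is a continuous bijection from the compact space $S^1$ onto $\Gamma$, hence a homeomorphism, so $\Gamma$ is a Jordan curve.
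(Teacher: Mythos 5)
Your overall strategy --- parameterize the core curves compatibly, show the parameterizations are uniformly Cauchy because $\sum_n w_n < \infty$, pass to a continuous limit with image $\Gamma$, and then prove injectivity --- is the same as the paper's, and the Cauchy estimate and surjectivity step are fine. The problem is in the injectivity argument, specifically your item (ii).

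You assert that two distinct parameters landing in the same straight piece of $\Gamma_n$ have $\Gamma_{n+1}$-images lying in disjoint tubes around \emph{distinct} polygonal strands. There is no reason for this: if $s$ and $t$ are close, $\gamma^{n+1}(s)$ and $\gamma^{n+1}(t)$ lie on the \emph{same} strand, and a priori the same can happen at every later stage, in which case your separation mechanism never engages and nothing rules out $\gamma(s)=\gamma(t)$. (A version of the same issue affects your ``far apart'' case: adjacent plumbing pieces are glued along their openings, so ``images in different pieces'' gives no positive distance; you need them separated by at least one intervening piece.) The missing ingredient is a mechanism forcing \emph{eventual combinatorial separation} of any two distinct parameters, and this is exactly where the concrete definition of the reparameterizations matters --- an arbitrary choice of homeomorphisms $h_n$ achieving only the sup-norm bound cannot yield injectivity, since one could collapse an interval. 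The paper supplies this mechanism in Lemma \ref{cornersgrow}: because $\gamma^{n+1}$ is defined by marking the entry times $t_i$ of $\gamma^n$ into the subdivided rectangles and using constant-speed parameterizations on $[t_i,t_{i+1}]$, and because the arc of $\gamma^{n+1}$ over $[t_i,t_{i+1}]$ is at least as long as the corresponding arc of $\gamma^n$, one gets $|\gamma^n(s)-\gamma^n(t)|\le|\gamma^{n+1}(s)-\gamma^{n+1}(t)|$ so long as both images stay in a common straight piece; since straight pieces of $\Gamma_{m+1}$ have length at most $10w_m\to 0$, the images must eventually be separated by at least one straight piece, hence by at least $50$ subdivided rectangles, after which the drift bound (the paper's Lemma \ref{cantleavehome}, essentially your item (iii)) keeps them apart in the limit. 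Your tube/drift argument is an acceptable substitute for that last step, but the monotone-stretching argument that produces the combinatorial separation in the first place is the actual crux of injectivity, and it is absent from your proposal.
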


To prove this, the first step is to construct appropriate parameterizations of $\gamma^{n+1}_{1/2}$ given some parameterization of the core curves $\gamma^n_{1/2}$ of $\Gamma_n$. Since we will always work with the core curves in this section, we will call $\gamma^n_{1/2} := \gamma^n$.  

Recall that each straight piece $P$ of $\Gamma_n$ is decomposed into subdivided rectangles with base approximately $\delta_n d_n$ and height $w_n$. We do not decompose the corner pieces and leave them as is. For some small choice of $\delta$, $\gamma^{n}(0,\delta)$ is contained in one of these subdivided rectangles or corners, so we denote it as $R_0$. We label the rest of the subdivided rectangles and corner pieces in the order that $\gamma^n$ passes through them. This gives a list of subdivided rectangles and corners $\{R_i\}_{i=0}^{m-1}$. We will consider this list modulo $m$, so that $R_0 = R_m$. Let and $t_i \in [0,1]$ for $i =1,\dots m$ be the first entry time of $\gamma^n$ in $R_i$. $t_{m}$ will be the exit time for $R_{m-1}$, which coincides with entering back inside $R_0$. $\gamma^n:(t_{m},1] \cup [0,t_1)$ goes from the end of $R_{m-1}$ to the end of $R_0$. 

We'll show how to parameterize $\gamma^{n+1}$ inside of the intervals $(t_i,t_{i+1})$. Here we take the convention that $(t_{m},t_{m+1}) = (t_{m},1] \cup [0,t_1)$. To do this, we will show where to place the points $\gamma^{n+1}(t_i)$ and $\gamma^{n+1}(t_{i+1})$. Then for the subarc of $\gamma^{n+1}$ in between these points, we will use a constant speed parameterization. We will handle this with three cases. 

For the first case, suppose that $R_i$ is a subdivided rectangle of a straight piece so that $R_{i+1}$ is not a corner piece. Orient $R_i$ so that the top boundary side is a subset of the innermost boundary component of $\Gamma_n$. If $R_i$ was labeled as B, then $\gamma^{n+1}(t_i)$ will be the point on the intersection of $\gamma^{n+1}$ and the left side of $R_i$ between $\gamma^n_{\frac{t_n-1}{t_n}}$ and $\gamma^n_1$. We let $\gamma^{n+1}_{1/2}(t_{i+1})$ be the point on the intersection of $\gamma^{n+1}$ and the right side of $R_i$ between $\gamma^n_{\frac{1}{t_n}}$ and $\gamma^n_0$. If $R_i$ was labeled as T, we do the opposite. $\gamma^{n+1}(t_i)$ will be the point on the intersection of $\gamma^{n+1}$ and the left side of $R_i$ between and $\gamma^{n}_{\frac{1}{t_n}}$ and $\gamma^n_0$, and $\gamma^{n+1}(t_{i+1})$ will be be the point on the intersection of $\gamma^{n+1}$ and the left side of $R_i$ between $\gamma^{n}_\frac{t_n-1}{t_n}$ and $\gamma^n_1$. See Figure \ref{StraightParameterization}.

Suppose that $R_i$ is a subdivided rectangle so that $R_{i+1}$ is a corner boundary piece. Then we label $\gamma^{n+1}(t_i)$ using the procedure above. We let $\gamma^{n+1}(t_{i+1})$ be the point in the same straight piece as $\gamma^{n+1}(t_i)$ that lies on the right side of $R_i$. See Figure \ref{CornerParameterization}.

Finally, suppose that $R_i$ is a corner piece. Then $R_{i-1}$ and $R_{i+1}$ fall into the above cases, which means that $\gamma^{n+1}(t_i)$ and $\gamma^{n+1}(t_{i+1})$ have already been defined. The former corresponds to the first time that $\gamma^{n+1}$ enters the corner piece, and the latter corresponds to the last time $\gamma^{n+1}$ exits the corner piece. We again use the constant speed parameterization to parameterize the subarc between these two points. See Figure \ref{CornerParameterization}.

\begin{figure}[!h]
	\centerline{ \includegraphics[height=3in,width=5in]{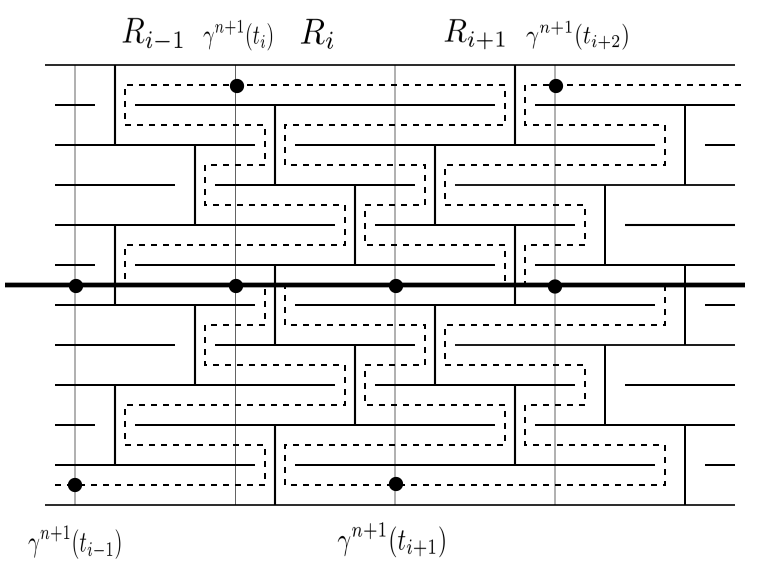} }
	\caption{Parameterizing in straight pieces. Denote the subdivided rectangles by $R_{i-1}, R_i, R_{i+1}$. The bold curve is $\gamma^n$, and $\gamma^n(t_j)$ is marked for $j = i-1,i,i+1,i+2$. Since $R_{i-1}$ is a T subdivided rectangle, $\gamma^{n+1}(t_{i-1})$ is the point on $\gamma^{n+1}$ with farthest distance below $\gamma^n(t_{i-1})$. Since $R_i$ is a B subdivided rectangle, $\gamma^{n+1}(t_i)$ is the point on $\gamma^{n+1}$ with farthest distance above $\gamma^n(t_i)$. We parameterize the portion of the curve between those points with a constant speed parameterization.}
	\label{StraightParameterization}
\end{figure}

\begin{figure}[!h]
	\centerline{ \includegraphics[height=3in,width=4in]{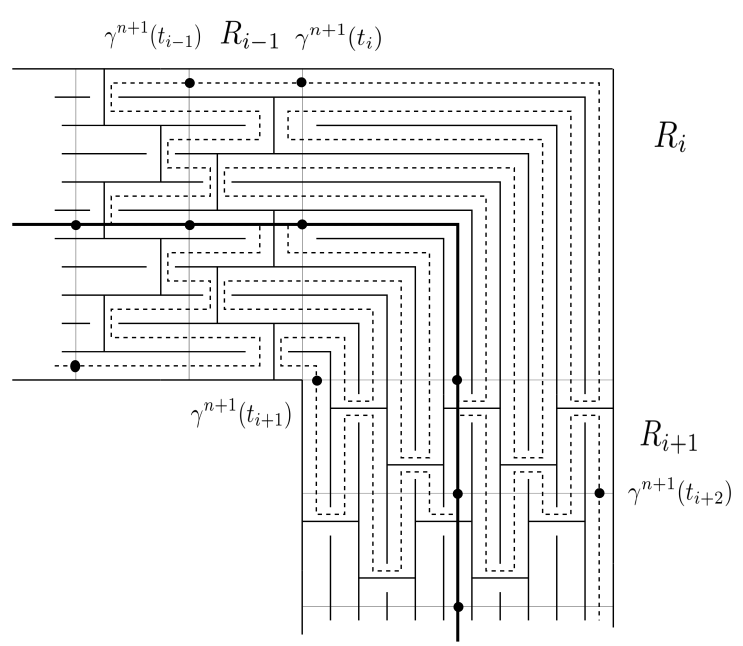} }
	\caption{Parameterizing near a corner piece. Let $R_{i-1}$ be the subdivided rectangle to the left of the corner piece, $R_{i}$. $\gamma^{n+1}:(t_{i-1},t_{i})$ is simply the portion of $\gamma^{n}(t_{i-1},t_{i})$ translated up to be between the final two elements of the plumbing foliation. $\gamma^{n+1}(t_{i},t_{i+1})$ is the constant speed parameterization between the already placed $\gamma^{n+1}(t_{i})$ and $\gamma^{n+1}(t_{i+1})$.}
	\label{CornerParameterization}
\end{figure}

\begin{lemma}
The sequence of functions $\{\gamma^{n}\}_{n=1}^{\infty}$ is uniformly Cauchy. In fact, for sufficiently large $n$, we have the estimate for all $t \in [0,1]$
$$|\gamma^{n+1}(t) - \gamma^n(t)| \leq 5 w_n.$$
\end{lemma}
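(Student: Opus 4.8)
The plan is to estimate $|\gamma^{n+1}(t) - \gamma^n(t)|$ piece-by-piece along the decomposition $\{R_i\}_{i=0}^{m-1}$ of $\Gamma_n$ into subdivided rectangles and corner pieces, using the fact that both curves are parameterized at constant speed on each subarc $(t_i,t_{i+1})$ with matching partition times. First I would reduce to a uniform bound at the partition points and a uniform bound on the diameters of the relevant pieces: if $|\gamma^{n+1}(t_i) - \gamma^n(t_i)| \leq C_1$ for every $i$, and for each $i$ the arc $\gamma^{n+1}|_{(t_i,t_{i+1})}$ stays within a set of diameter at most $C_2$ while $\gamma^n|_{(t_i,t_{i+1})}$ stays within a set of diameter at most $C_3$, then for all $t \in (t_i,t_{i+1})$ one gets $|\gamma^{n+1}(t) - \gamma^n(t)| \leq C_1 + C_2 + C_3$ by the triangle inequality (comparing both to, say, the common value at $t_i$). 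The content is then in controlling $C_1, C_2, C_3$ in terms of $w_n$ and $w_{n+1}$.

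Next I would carry out the three geometric estimates dictated by the three cases of the parameterization. For a subdivided rectangle $R_i$ of a straight piece: $\gamma^n$ on $(t_i,t_{i+1})$ traverses $R_i$ essentially horizontally, so its trace has diameter comparable to the base length of $R_i$, which is at most $2\delta_n d_n \le 2w_n$ plus the vertical extent $w_n$; meanwhile $\gamma^{n+1}$ on the same interval weaves up and down inside $R_i$ (or the shifted copy of it), so its trace lies inside a region of diameter at most $\mathrm{diam}(R_i) + O(w_{n+1}) \le 3w_n + O(w_n)$. The key point is that $\gamma^{n+1}(t_i)$ was defined to lie on the left side of $R_i$ within distance $w_n/t_n \le w_n$ of where $\gamma^n$ enters, or more precisely within the last/first foliation band, which has width $w_n/t_n$; combined with (\ref{wntn}) this gives $|\gamma^{n+1}(t_i)-\gamma^n(t_i)|$ small relative to $w_n$. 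For corner pieces one argues similarly: $\gamma^n$ and $\gamma^{n+1}$ both stay inside the corner piece (whose two legs have width $w_n$ and whose arms have length comparable to the adjacent straight pieces' widths, again $O(w_n)$), and the already-placed endpoints agree up to $O(w_n)$ by the straight-piece case. Summing the contributions $C_1, C_2, C_3$, each a bounded multiple of $w_n$, and being slightly careful with the constants, yields the bound $5w_n$ for large $n$ (the "large $n$" is only needed to absorb lower-order terms like $\delta_n d_n$ and $w_{n+1}$, which are $o(w_n)$ by (\ref{deltan}) and (\ref{wntn})).

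Finally, uniform Cauchyness follows immediately: by (\ref{wnconverges}), $\sum_n w_n < \infty$, so $\sum_n \sup_t |\gamma^{n+1}(t)-\gamma^n(t)| < \infty$, and hence $\{\gamma^n\}$ converges uniformly. The main obstacle I anticipate is bookkeeping rather than conceptual: precisely tracking where $\gamma^{n+1}(t_i)$ sits relative to $\gamma^n(t_i)$ in each of the several sub-cases (B versus T labeled rectangles, rectangle-before-corner, and corner pieces themselves), and confirming that in every case the displacement and the trace-diameters are all bounded by small multiples of $w_n$ so that they sum to at most $5w_n$. One has to be careful that $\gamma^{n+1}$, though it oscillates wildly \emph{within} each piece, never leaves the piece containing the corresponding portion of $\gamma^n$ (or its prescribed shift), which is exactly what the construction in Section 4 guarantees since $\Gamma_{n+1} \subset \Gamma_n$ and the foliation-following segments stay inside their host pieces. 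Keeping the constant honestly below $5$ may require noting that the worst case combines at most "diameter of $R_i$ for $\gamma^n$" ($\le 3w_n$ generously), "diameter of the shifted region for $\gamma^{n+1}$" ($\le w_n + o(w_n)$), and "endpoint error" ($o(w_n)$), which is why the clean statement is only claimed for sufficiently large $n$.
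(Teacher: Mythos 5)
Your proof is correct and follows essentially the same route as the paper: both arguments come down to observing that for $t\in[t_i,t_{i+1}]$ the points $\gamma^n(t)$ and $\gamma^{n+1}(t)$ lie in a union of at most three consecutive pieces, each of diameter at most $\sqrt{2}\,w_n$, so the difference is at most $3\sqrt{2}\,w_n\le 5w_n$, and then summability of $w_n$ from (\ref{wnconverges}) gives uniform Cauchyness. (Your claim that $\gamma^{n+1}(t_i)$ lies within $w_n/t_n$ of $\gamma^n(t_i)$ is not quite right --- the vertical offset to the core curve is about $w_n/2$ --- but your fallback bound of $w_n$ keeps the total under $5w_n$, so nothing breaks.)
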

\begin{proof}
If $\gamma^n(t) \in R_i$, and $t \in [t_i, t_{i+1}]$, observe that the curve $\gamma^{n+1}_{1/2}: [t_i,t_{i+1}]$ can only intersect the rectangles $R_{i-1}, R_i$ and $R_{i+1}$. Since $\diam(R_i) \leq \sqrt{2}w_n$ for all corner pieces and subdivided rectangles, this means that $\diam(\gamma^{n+1}_{1/2}(t_i,t_{i+1})) \leq 3\sqrt{2} w_n \leq 5w_n$. Since $\gamma^n(t) \in R_i$, it follows that 
$$|\gamma^{n+1}(t) - \gamma^{n}(t)| \leq 5 w_n.$$
By $(\ref{wnconverges})$, this is a summable estimate independent of $t \in [0,1]$, so it follows that $\gamma^n$ is uniformly Cauchy.
\end{proof}

It follows that $\gamma^n$ converges uniformly to  $\gamma$ for some continuous function $\gamma: [0,1] \ra \bC$. Since $\gamma$ is continuous and the widths of $\Gamma_n$ are strictly decreasing, we must have $\gamma([0,1]) = \Gamma$. So to show $\Gamma$ is a Jordan curve it is sufficient to show that $\gamma$ is injective.

\begin{lemma}
\label{cantleavehome}
For all sufficiently large $n$, if $\gamma^n(t) \in R_i$ for some $i$, then $\gamma(t) \in R_{i-2}, \dots , R_{i+2}$. 
\end{lemma}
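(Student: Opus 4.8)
The plan is to control $\gamma(t)$ by comparing it not with $\gamma^n(t)$ but with $\gamma^{n+1}(t)$. Comparing with $\gamma^n$ is useless here, since $|\gamma(t)-\gamma^n(t)|$ is only of order $w_n$, which is enormous compared with the width $\delta_n d_n$ of a subdivided rectangle of $\Gamma_n$. On the other hand, the proof of the previous lemma already localizes $\gamma^{n+1}(t)$ inside $R_{i-1}\cup R_i\cup R_{i+1}$, while — as I will explain — the remaining drift $|\gamma(t)-\gamma^{n+1}(t)|$ is forced by the parameter inequalities to be a tiny fraction of $\delta_n d_n$. Since, at scales much smaller than $w_n$, the plumbing $\Gamma_n$ does not fold back on itself, this will pin $\gamma(t)$ to within one subdivided rectangle of $\gamma^{n+1}(t)$, which is exactly the $\pm 2$ in the statement.

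The first ingredient is the drift bound. Telescoping the estimate $|\gamma^{m+1}(t)-\gamma^m(t)|\le 5w_m$ of the previous lemma and using $w_{m+1}\le w_m/t_m$ together with $t_m>100$ (a consequence of (\ref{tn})), one gets, for $n$ large and all $t\in[0,1]$,
\[
|\gamma(t)-\gamma^{n+1}(t)|\ \le\ \sum_{m=n+1}^{\infty}5w_m\ \le\ \frac{500}{99}\,w_{n+1}\ <\ 6\,w_{n+1}.
\]
By (\ref{wntn}) and (\ref{biggerthanone}) we have $w_{n+1}\le w_n/t_n\le(\delta_nd_n)^2/(2v_n)$, and since $v_n\ge 9$ and $\delta_nd_n\le 1$ for large $n$ this yields $|\gamma(t)-\gamma^{n+1}(t)|<\tfrac13\delta_nd_n$. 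The second ingredient is localization of $\gamma^{n+1}$: if $\gamma^n(t)\in R_i$, so $t\in[t_i,t_{i+1}]$, then the argument in the proof of the previous lemma gives $\gamma^{n+1}([t_i,t_{i+1}])\subseteq R_{i-1}\cup R_i\cup R_{i+1}$; moreover, inspecting the explicit placement of the points $\gamma^{n+1}(t_i),\gamma^{n+1}(t_{i+1})$ and the constant-speed interpolation between them, one checks that $\gamma^{n+1}([t_i,t_{i+1}])$ in fact stays within distance $\sqrt2\,w_{n+1}$ of $\overline{R_i}$. Combining the two ingredients, $\gamma(t)$ is a point of $\Gamma_n$ at distance strictly less than $\tfrac12\delta_nd_n$ from $\overline{R_i}$.

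It then remains to prove the purely geometric statement: if $z\in\Gamma_n$ and $\dist(z,\overline{R_i})<\tfrac12\delta_nd_n$, then $z\in R_{i-2}\cup\cdots\cup R_{i+2}$; equivalently, pieces $R_j,R_k$ of $\Gamma_n$ with cyclic index distance $|j-k|\ge 3$ satisfy $\dist(\overline{R_j},\overline{R_k})\ge\tfrac12\delta_nd_n$. There are two cases. If $R_j$ and $R_k$ lie in a common straight piece of $\Gamma_n$, or straddle a single corner piece — and no two corner pieces can occur among three consecutive indices, since by (\ref{dn}) every straight piece of $\Gamma_n$ contains at least $1/(2\delta_n)>50$ subdivided rectangles — then an entire subdivided rectangle lies between them, and its width is at least $\tfrac12\delta_nd_n$. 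Otherwise $R_j$ and $R_k$ lie in non-adjacent arms of $\Gamma_n$ and are in fact $\gg w_n$ apart: all straight pieces of $\Gamma_n$ have width $w_n$, which by (\ref{etan}) and (\ref{wntn}) is negligible next to the distances $\gtrsim\delta_{n-1}d_{n-1}/v_{n-1}$ and $\gtrsim w_{n-1}/t_{n-1}$ separating the thickened vertical segments and the thickened foliation segments of $\Gamma_{n-1}$ out of which the arms of $\Gamma_n$ were built, so two distinct arms (or an arm and a non-adjacent corner) of $\Gamma_n$ are separated by much more than $\delta_nd_n$. This proves the statement and hence the lemma, the $\pm2$ absorbing the $\pm1$ coming from $\gamma^{n+1}(t)\in R_{i-1}\cup R_i\cup R_{i+1}$ and the further $\pm1$ coming from the drift crossing at most one more rectangle.

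The main obstacle is the geometric step, and within it the behaviour at the inner corner vertices of the corner pieces of $\Gamma_n$: there the last subdivided rectangle of one arm and the first of the next arm share a boundary point, so two pieces at index distance $2$ can touch. This is exactly why it is not enough to know $\gamma^{n+1}(t)\in R_{i-1}\cup R_i\cup R_{i+1}$; one must additionally use the refined localization $\gamma^{n+1}([t_i,t_{i+1}])\subseteq\{z:\dist(z,\overline{R_i})\le\sqrt2\,w_{n+1}\}$, so that $\gamma(t)$ is close to $\overline{R_i}$ itself and is therefore at index distance at least $3$ — hence distance at least $\tfrac12\delta_nd_n$ — from $R_{i\pm3}$. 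I expect the bulk of the write-up to consist of (i) carefully reading this refined localization off the parameterization of Section 4, including the corner-piece and corner-adjacent cases, and (ii) making the scale comparisons $w_{n+1}\ll(\delta_nd_n)^2\ll\delta_nd_n\ll w_n\ll w_{n-1}/t_{n-1}$ precise from (\ref{dn}), (\ref{etan}), (\ref{wntn}) and (\ref{biggerthanone}); both are routine but somewhat lengthy.
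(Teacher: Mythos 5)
Your proof follows the same route as the paper's: the telescoped drift bound $|\gamma(t)-\gamma^{n+1}(t)|\le 6w_{n+1}$, the scale comparison $w_{n+1}\le w_n/t_n\le(\delta_nd_n)^2/(2v_n)\ll\delta_nd_n$ from (\ref{wntn}) and (\ref{biggerthanone}), and the localization $\gamma^{n+1}(t)\in R_{i-1}\cup R_i\cup R_{i+1}$, so the approaches are essentially identical. The only difference is that you make explicit the final geometric step that the paper compresses into ``from this and the construction of $\Gamma$ we deduce\ldots'' --- namely the one-point contact of pieces across an inner corner and the separation between distinct arms of $\Gamma_n$ --- and your handling of that step is correct.
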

\begin{proof}
We know that $\gamma^{n+1}(t) \in R_{i-1}, R_i$, or, $R_{i+1}$. Then we can estimate using $(\ref{wntn})$ that
\begin{eqnarray*}
|\gamma(t) - \gamma^{n+1}(t)| &\leq& \sum_{j=n+1}^{\infty} |\gamma^j(t) - \gamma^{j+1}(t)| \\
	&\leq&	\sum_{j=n+1}^{\infty} 5 w_j \\
	&\leq& 5 w_{n+1}\left(1+ \sum_{j=n+1}^{\infty} \frac{1}{\prod_{k=n+1}^j t_k} \right) \\
	&\leq& 6 w_{n+1}
\end{eqnarray*}
Next observe that by $(\ref{wntn})$ and $(\ref{biggerthanone})$,
$$w_{n+1} \leq \frac{w_n}{t_n} \leq \frac{(\delta_nd_n)^2}{2v_n} \leq \frac{1}{100} \delta_n d_n.$$
From this and the construction of $\Gamma$ we deduce that $\gamma(t) \in R_{i-2},\dots, R_{i+2}$.  
\end{proof}

\begin{lemma}
\label{cornersgrow}
There exists $n$ large enough so that if $s \neq t$, $\gamma^n_{1/2}(s)$ and $\gamma^n_{1/2}(t)$ are separated by at least $1$ straight piece.
\end{lemma}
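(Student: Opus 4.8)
\emph{Plan.} I read the statement pointwise: given $s\neq t$ I will produce an $n$ depending on $s,t$ so that along the core curve of $\Gamma_n$ there is a whole straight piece between $\gamma^n_{1/2}(s)$ and $\gamma^n_{1/2}(t)$, and then feed this into Lemma~\ref{cantleavehome} to conclude that $\gamma$ is injective, completing Theorem~\ref{JordanCurve}. (The uniform version --- one $n$ working for all $s\neq t$ at once --- is plainly false, so the pointwise reading is the right one.) Write $\gamma^n:=\gamma^n_{1/2}$. For each $n$ let $0=t^n_0<\cdots<t^n_{m_n}=1$ be the entry times of $\gamma^n$ into the subdivided rectangles and corner pieces of $\Gamma_n$, i.e.\ the partition used in Section~4 to define $\gamma^{n+1}$. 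Since the core curve of the plumbing $\Gamma_n$ meets each such piece exactly once these are genuine disjoint intervals, and $\gamma^{n+1}$ was built to be constant speed on each $[t^n_i,t^n_{i+1}]$. Put $\mu_n:=\max_i(t^n_{i+1}-t^n_i)$.

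The key point is $\mu_n\to 0$. Two inputs drive this. First, a length bound: on each interval $[t^n_i,t^n_{i+1}]$ the endpoints $\gamma^{n+1}(t^n_i)$ and $\gamma^{n+1}(t^n_{i+1})$ were placed, by the recipe of Section~4, near opposite ends of $R_i$ (near the two opposite boundary sides when $R_i$ is a subdivided rectangle, near the two openings when $R_i$ is a corner piece), so the constant-speed subarc of $\gamma^{n+1}$ joining them has length at least a fixed multiple of $w_n$. Second, a size comparison: every subdivided rectangle or corner piece of $\Gamma_{n+1}$ has diameter $\lesssim w_{n+1}\le w_n/t_n$ by (\ref{wntn}), so it meets only a bounded number of consecutive pieces of $\Gamma_n$; hence each interval of the level-$(n+1)$ partition, being the time $\gamma^{n+1}$ spends in one such piece, lies inside a bounded union of intervals $[t^n_i,t^n_{i+1}]$, on each of which $\gamma^{n+1}$ runs at constant speed with total length $\gtrsim w_n$ while inside the tiny $\Gamma_{n+1}$-piece it traverses only a subarc of length $\lesssim w_{n+1}$. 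Comparing speeds gives $\mu_{n+1}\le \frac{C}{t_n}\mu_n$, and $\sum 1/t_n<1/100$ by (\ref{tn}) forces $\mu_n\to 0$.

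Granting this, fix $s\neq t$ and let $\delta>0$ be the length of the shorter of the two arcs of $[0,1]$ (endpoints identified) between them; choose $n$ with $\mu_n<\delta/3$. That arc then contains at least two consecutive full intervals of the level-$n$ partition, and since two corner pieces of $\Gamma_n$ always have at least $100$ subdivided rectangles between them (as $\delta_n<1/100$), at least one of these, say $[t^n_i,t^n_{i+1}]$, belongs to a subdivided rectangle $R_i$. On that interval $\gamma^{n+1}$ enters $R_i$, weaves down through and back up all the vertical rook segments inside $R_i$, and exits, so it fully traverses many straight pieces of $\Gamma_{n+1}$ --- all while $s$ and $t$ lie outside $[t^n_i,t^n_{i+1}]$ on the same side. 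Hence $\gamma^{n+1}(s)$ and $\gamma^{n+1}(t)$ are separated along $\gamma^{n+1}$ by at least one straight piece of $\Gamma_{n+1}$, which is the lemma with $n+1$ in place of $n$. To close Theorem~\ref{JordanCurve}: given $s\neq t$, with $n$ as produced we get $\gamma^n(s)\in R_a$ and $\gamma^n(t)\in R_b$ with at least $100$ indices between $a$ and $b$; by Lemma~\ref{cantleavehome}, $\gamma(s)\in R_{a-2}\cup\cdots\cup R_{a+2}$ and $\gamma(t)\in R_{b-2}\cup\cdots\cup R_{b+2}$, which are disjoint, so $\gamma(s)\neq\gamma(t)$.

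\emph{Main obstacle.} The delicate step is the nesting behind $\mu_n\to 0$: that the visits of the core curve of $\Gamma_{n+1}$ to any single subdivided rectangle or corner piece of $\Gamma_n$ occupy only boundedly many consecutive level-$n$ intervals. This amounts to checking that $\gamma^{n+1}$ runs through the subdivided rectangles of $\Gamma_n$ in an orderly (essentially boustrophedon) fashion, using the long thin horizontal foliation rectangles of $\Gamma_{n+1}$ only as connectors between consecutive subdivided rectangles rather than ping‑ponging among them. That is the one place where the routing combinatorics of Section~4 has to be used directly; the rest is bookkeeping with the constant-speed parameterization and the estimates (\ref{wntn}), (\ref{tn}).
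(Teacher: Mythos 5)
Your pointwise reading of the statement and your endgame (feeding the separation into Lemma \ref{cantleavehome} to get injectivity) match the paper, but your route to the separation --- decay of the partition mesh $\mu_n$ --- is genuinely different from the paper's, and it has a gap at exactly the step you flag as the ``main obstacle.'' The contraction $\mu_{n+1}\le \frac{C}{t_n}\mu_n$ rests on the claim that on \emph{every} level-$n$ partition interval $[t^n_i,t^n_{i+1}]$ the constant-speed subarc of $\gamma^{n+1}$ has length $L_i\gtrsim w_n$, so that its speed is $\gtrsim w_n/\mu_n$. That holds when $R_i$ is a subdivided rectangle through which $\gamma^{n+1}$ makes a full vertical sweep, but it is not established (and is false) for the corner pieces of $\Gamma_n$ and for the subdivided rectangles adjacent to them: there (see Figure \ref{CornerParameterization}) $\gamma^{n+1}$ merely hugs one of the last two foliation elements, so $L_i$ is only of order $w_n/t_n$ or $\delta_n d_n$, not $w_n$. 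With $L_i\gtrsim w_n/t_n$ and only the crude bound $w_{n+1}\le w_n/t_n$ of (\ref{wntn}), your speed comparison degenerates to $\mu_{n+1}\le C\mu_n$, which does not give $\mu_n\to 0$. The argument is repairable --- one needs a uniform lower bound $L_i\gtrsim w_n/t_n$ for \emph{all} pieces (which requires pinning down the corner routing you defer) combined with the much stronger inequality $w_{n+1}\approx 2\eta_n\le 2w_n/(100^n t_n)$ coming from (\ref{etan}), which restores a summable contraction factor --- but as written the key estimate is asserted, not proved. Two smaller points: your chosen interval could be one of the special rectangles adjacent to a corner on which $\gamma^{n+1}$ need not fully traverse any straight piece of $\Gamma_{n+1}$ (take more than two consecutive full intervals to guarantee a generic weaving rectangle), and the separation must be verified on both circular arcs between $s$ and $t$, not just the shorter one, before Lemma \ref{cantleavehome} applies.

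For comparison, the paper's proof avoids the mesh entirely. It observes that if $\gamma^m(s)$ and $\gamma^m(t)$ lie in a common straight piece at levels $m$ and $m+1$, then the constant-speed reparameterization, together with the fact that $\gamma^{m+1}$ is at least as long as $\gamma^m$ on each $[t_i,t_{i+1}]$, forces $|\gamma^{m}(s)-\gamma^{m}(t)|\le|\gamma^{m+1}(s)-\gamma^{m+1}(t)|$ (inequality (\ref{longer})); since straight pieces of $\Gamma_{m+1}$ have diameter at most $10w_m\to 0$ while $|\gamma^0(s)-\gamma^0(t)|>0$, the two points must eventually separate. That monotonicity argument is shorter and sidesteps the corner-piece speed issue entirely; your mesh bound, if completed, would be more quantitative (it estimates \emph{which} $n$ works in terms of $|s-t|$), but at present the central inequality $\mu_{n+1}\le C\mu_n/t_n$ is not justified.
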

\begin{proof}
Suppose that $\gamma^n(s)$ and $\gamma^n(t)$ belong to the same straight piece of $\Gamma_n$. What happens if $\gamma^{n+1}(s)$ and $\gamma^{n+1}(t)$ belong to the same straight piece of $\Gamma_{n+1}$? Then we must have 
\begin{equation}
\label{longer}
|\gamma^n(s) - \gamma^n(t)| \leq |\gamma^{n+1}(s) - \gamma^{n+1}(t)|
\end{equation}
This is because of how we parameterize $\gamma^{n+1}$ with respect to $\gamma^{n}$. Indeed, the procedure we used to parameterize the $\gamma^n$'s implies that the length of $\gamma^{n+1}$ restricted to $[t_i, t_{i+1}]$ is always bounded below by the length of $\gamma^n$ restricted to $[t_i,t_{i+1}]$. Since we use the constant speed parameterization to define $\gamma^{n+1}$ on $[t_i,t_{i+1}]$, we must have inequality $(\ref{longer})$. 

Note that in $\Gamma_{m+1}$ for any $m$, the longest a straight piece can be is bounded above by $10 w_m$. This follows from the construction and Lemma \ref{cantleavehome}. Combined with $(\ref{longer})$, it follows that $\gamma^m(s)$ and $\gamma^m(t)$ cannot belong to the same straight piece for all $m \geq n$. 
\end{proof}

\begin{lemma}
The limit function $\gamma$ above is injective.
\end{lemma}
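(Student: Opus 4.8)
The plan is to obtain injectivity of $\gamma$ on $[0,1)$ from two facts already in hand: the ``corners grow'' statement (Lemma \ref{cornersgrow}), which separates any two distinct parameters at a finite stage, and the locality estimate (Lemma \ref{cantleavehome}), which shows that such a separation cannot be undone in the limit. First observe that each $\gamma^n$ traces a Jordan curve, so $\gamma^n$ is injective on $[0,1)$ with $\gamma^n(0)=\gamma^n(1)$; passing to the limit gives $\gamma(0)=\gamma(1)$, and it therefore suffices to prove $\gamma(s)\neq\gamma(t)$ for arbitrary distinct $s,t\in[0,1)$.

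Fix such a pair $s\neq t$. First I would choose $n$ large enough that the conclusions of Lemma \ref{cantleavehome} and of Lemma \ref{cornersgrow} both hold. By Lemma \ref{cornersgrow}, $\gamma^n(s)$ and $\gamma^n(t)$ lie in straight pieces of $\Gamma_n$ that are separated along the plumbing by at least one further straight piece (if one prefers to read that lemma as only producing two distinct straight pieces, enlarging $n$ once more suffices, since the corner piece of $\Gamma_n$ between them is refined into several pieces of $\Gamma_{n+1}$, among them straight ones). I would then express this in the cyclic list $\{R_i\}$ of subdivided rectangles and corner pieces of $\Gamma_n$ from Section 5: writing $\gamma^n(s)\in R_i$, $\gamma^n(t)\in R_j$, and using that each straight piece is cut into many subdivided rectangles --- more than $50$, since $\delta_n<1/100$ and every straight piece has length at least $l_n\geq d_n$ --- the cyclic distance between $i$ and $j$ is much larger than $5$. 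Now Lemma \ref{cantleavehome} forces $\gamma(s)\in R_{i-2}\cup\cdots\cup R_{i+2}$ and $\gamma(t)\in R_{j-2}\cup\cdots\cup R_{j+2}$, and these two unions run over index ranges that are disjoint and in fact separated by at least one whole intervening piece. Since a plumbing is an embedded closed annulus decomposed by its pieces, and the construction of Section 4 inserts only arms that stay pairwise separated by gaps of positive width --- this is precisely the purpose of the thickening parameter $\eta_n$ of (\ref{etan}) and of the no-adjacent-rooks property of Lemma \ref{rook} --- these two collections of pieces are disjoint closed subsets of the plane. Hence $\gamma(s)$ and $\gamma(t)$ lie in disjoint sets, so $\gamma(s)\neq\gamma(t)$.

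The step I expect to demand the most care is the final geometric bookkeeping rather than anything conceptual. The subtle point is that a corner piece keeps its inner-corner vertex, so two pieces that are only two apart in the list $\{R_i\}$ can still meet at that single point; one therefore genuinely needs the combinatorial separation coming from Lemma \ref{cornersgrow} to exceed the $\pm2$ spreading of Lemma \ref{cantleavehome} by a definite margin --- which it does, since one intervening straight piece already contributes on the order of $50$ entries to $\{R_i\}$ --- and one needs to know that the plumbing never folds back on itself tightly enough for two far-apart arms to touch. Checking this last point against the explicit construction of Section 4, by tracking the positive gaps created by $\eta_n$ and the rook placement at every scale, is the one place where real (though routine) verification is required.
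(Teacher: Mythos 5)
Your argument is essentially identical to the paper's proof: apply Lemma \ref{cornersgrow} to separate $\gamma^n(s)$ and $\gamma^n(t)$ by a straight piece, observe that $\delta_n<1/100$ turns this into a separation by at least $50$ subdivided rectangles, and then invoke Lemma \ref{cantleavehome} to conclude $\gamma(s)\neq\gamma(t)$. Your additional care about the closed-curve identification $\gamma(0)=\gamma(1)$ and about why combinatorially distant pieces of an embedded annulus are geometrically disjoint only makes explicit what the paper leaves implicit.
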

\begin{proof}
Choose some $s< t$. By Lemma \ref{cornersgrow}, there exists a value $n$ so that $\gamma^n(s)$ and $\gamma^n(t)$ are separated by at least $1$ straight piece. Since $\delta_n < \frac{1}{100}$, there are at least $50$ subdivided rectangles between $\gamma^n(s)$ and $\gamma^n(t)$. Therefore, Lemma \ref{cantleavehome} says that $\gamma(s) \neq \gamma(t)$.
\end{proof}

\begin{corollary}
$\Gamma$ is a Jordan curve.
\end{corollary}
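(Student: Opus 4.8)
The plan is simply to collect the facts established in the lemmas above into the standard characterization of a Jordan curve. Recall that the core curves $\gamma^n = \gamma^n_{1/2}$ converge uniformly to a continuous map $\gamma\colon[0,1]\to\bC$. Each $\gamma^n$ is the core curve of the plumbing $\Gamma_n$ and hence a closed curve, so $\gamma^n(0)=\gamma^n(1)$ for every $n$; passing to the limit gives $\gamma(0)=\gamma(1)$, so $\gamma$ factors through a continuous map of the circle $S^1 = [0,1]/(0\sim 1)$ into $\bC$.

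Next I would verify that $\gamma(S^1) = \Gamma$. One inclusion is immediate: for $m\ge n$ the core curve $\gamma^m$ lies in $\Gamma_m\subset\Gamma_n$, and since $\Gamma_n$ is closed the uniform limit satisfies $\gamma([0,1])\subset\Gamma_n$; intersecting over $n$ gives $\gamma([0,1])\subset\Gamma$. For the reverse inclusion, given $x\in\Gamma=\bigcap_n\Gamma_n$, for each $n$ there is $s_n\in[0,1]$ with $|\gamma^n(s_n)-x|\le w_n$, since every point of the plumbing $\Gamma_n$ lies within its width $w_n$ of the core curve; passing to a subsequence $s_{n_k}\to s$ and combining the uniform convergence $\gamma^{n_k}\to\gamma$ with the continuity of $\gamma$ gives $\gamma(s)=x$. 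Hence $\gamma(S^1)=\Gamma$, which is consistent with $\gamma^n$ converging to a parameterization of $\Gamma$ because $w_n\to 0$ by $(\ref{wnconverges})$.

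Finally, the injectivity lemma above shows that $\gamma$ is injective on $S^1$: for distinct points $s,t$ of the circle, Lemma \ref{cornersgrow} gives an $n$ for which $\gamma^n(s)$ and $\gamma^n(t)$ lie in straight pieces of $\Gamma_n$ separated by at least one straight piece, hence (using $\delta_n<1/100$) by at least $50$ subdivided rectangles, and Lemma \ref{cantleavehome} then forces $\gamma(s)\neq\gamma(t)$. Thus $\gamma$ is a continuous injection from the compact space $S^1$ onto the Hausdorff space $\Gamma$, hence a homeomorphism onto its image, and therefore $\Gamma$ is a Jordan curve. This also completes the proof of Theorem \ref{JordanCurve}.

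I do not expect any real obstacle here: all the substance lies in Lemmas \ref{cantleavehome}, \ref{cornersgrow}, and the injectivity lemma, which are already in place. The only points requiring a little care are the bookkeeping of the endpoint identification $0\sim 1$, so that "injective" is understood as injectivity of the induced map of the circle (consistent with each $\gamma^n$ being closed), and the short Hausdorff-type argument that the limit of the core curves exhausts all of $\bigcap_n\Gamma_n$.
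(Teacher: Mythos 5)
Your proposal is correct and follows the same route as the paper: the corollary is an immediate assembly of the uniform convergence of the core curves, the surjectivity $\gamma([0,1])=\Gamma$ (which the paper asserts from the widths $w_n\to 0$ and you justify with the standard subsequence argument), and the injectivity lemma. The extra care you take with the identification $0\sim 1$ and the compact-to-Hausdorff homeomorphism argument is exactly the bookkeeping the paper leaves implicit.
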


\section{The Jordan curve is non-pierce-able by rectifiable arcs}
By Lemma \ref{Cantor}, to show that $\Gamma$ is non-pierceable by rectifiable arcs, we only must prove that $\Gamma$ is not pierceable by unit length simple rectifiable curves which cross $\Gamma$. 

\begin{lemma}
\label{MustCross}
Suppose that $\sigma$ is a rectifiable arc which crosses $\Gamma$. Then there exists $n$ so that the endpoints of $\sigma$ are in distinct complementary components of $\Gamma_n$. 
\end{lemma}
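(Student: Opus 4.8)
The plan is to argue by contradiction: suppose $\sigma$ is a rectifiable arc crossing $\Gamma$ whose endpoints $\sigma(0), \sigma(1)$ fail to lie in distinct complementary components of $\Gamma_n$ for every $n$. Write $\Omega_1$ and $\Omega_2$ for the bounded and unbounded complementary components of $\Gamma$, and similarly $\Omega_1^n, \Omega_2^n$ for those of $\Gamma_n$. Since $\Gamma$ crosses, after relabeling we may assume $\sigma(0) \in \Omega_1$ and $\sigma(1) \in \Omega_2$. The key observation is that the bounded complementary components are nested: because each $\Gamma_{n+1}$ is compactly contained in $\Gamma_n$ (from the theorem in Section 4), the interior annulus regions shrink and we get $\overline{\Omega_1} = \bigcap_n \overline{\Omega_1^n}$ and, dually, $\Omega_2 = \bigcup_n \Omega_2^n$ with $\overline{\Omega_2^n} \subset \Omega_2^{n+1}$.

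First I would show that $\sigma(0) \in \Omega_1^n$ for all large $n$ and $\sigma(1) \in \Omega_2^n$ for all large $n$ separately; combined, these two facts immediately contradict the assumption that for every $n$ the two endpoints lie in the same complementary component (or on $\Gamma_n$ itself). For $\sigma(1)$: since $\sigma(1) \in \Omega_2$ and $\Omega_2$ is open, $\sigma(1)$ has positive distance $d$ from $\Gamma$; because $\sup_{z \in \Gamma_n} \dist(z,\Gamma) \to 0$ (the widths $w_n \to 0$ by \eqref{wnconverges} and $\Gamma \subset \Gamma_n$ with $\Gamma = \bigcap \Gamma_n$), for $n$ large the whole annulus $\Gamma_n$ lies within distance $d/2$ of $\Gamma$, so $\sigma(1)$ is not on $\Gamma_n$; and since $\sigma(1) \notin \overline{\Omega_1} \supset \overline{\Omega_1^{n}}$'s limit, a short argument shows $\sigma(1) \in \Omega_2^n$ for large $n$ — indeed if $\sigma(1)$ were in $\Omega_1^n$ for infinitely many $n$, then since $\overline{\Omega_1^n}$ decreases to $\overline{\Omega_1}$ we would get $\sigma(1) \in \overline{\Omega_1}$, contradicting $\sigma(1) \in \Omega_2$. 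For $\sigma(0)$: symmetrically, $\sigma(0) \in \Omega_1$, which is open, so $\sigma(0)$ is at positive distance from $\Gamma$; since $\Omega_1 = \mathrm{int}\,\bigcap_n \overline{\Omega_1^n}$ and $\sigma(0)$ lies in the open kernel, $\sigma(0) \in \Omega_1^n$ for all large $n$.

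The main obstacle I anticipate is making the nesting/limit statements about the complementary components fully rigorous — in particular verifying that $\bigcap_n \overline{\Omega_1^n}$ really equals $\overline{\Omega_1}$ (equivalently that no extra interior is lost in the limit, which uses that $\Gamma = \bigcap_n \Gamma_n$ has empty interior relative to separating the two specific points, or more simply that each $\overline{\Omega_1^{n+1}} \subset \Omega_1^n \cup \Gamma_n$ so the intersection of the closed sets is exactly the closed region bounded by $\Gamma$). This is essentially point-set topology of nested annuli and should follow cleanly from $\Gamma_{n+1} \Subset \Gamma_n$ together with $w_n \to 0$; I would isolate it as the one place needing care, and everything else is a routine distance estimate. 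Note that rectifiability of $\sigma$ is not actually needed for this lemma — only that $\sigma$ crosses $\Gamma$, so its endpoints lie in the (open) complementary components — but stating it for rectifiable $\sigma$ is harmless and matches the context of the subsequent sections.
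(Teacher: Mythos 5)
Your overall strategy is the right one and matches the spirit of the paper's (one-line) proof: the endpoints lie in the open complementary components of $\Gamma$, and since the plumbings collapse onto $\Gamma$, they must eventually land in distinct complementary components of $\Gamma_n$. However, your topological bookkeeping has the nesting of the complementary components backwards, and this is a concrete error. Since $\Gamma_{n+1}$ is an essential subannulus compactly contained in $\Gamma_n$, the bounded complementary component (the ``hole'') of $\Gamma_{n+1}$ contains that of $\Gamma_n$ together with the inner collar of $\Gamma_n\setminus\Gamma_{n+1}$; so $\Omega_1^n\subset\Omega_1^{n+1}$ and the holes \emph{increase}, exactly as the unbounded components do. Your claims $\overline{\Omega_1}=\bigcap_n\overline{\Omega_1^n}$ and ``$\overline{\Omega_1^n}$ decreases to $\overline{\Omega_1}$'' are therefore false, and the step for $\sigma(1)$ that invokes this decreasing limit, as well as the identification $\Omega_1=\mathrm{int}\bigcap_n\overline{\Omega_1^n}$ used for $\sigma(0)$, do not go through as written.

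The good news is that the correct nesting makes the proof shorter than what you wrote. From $\Gamma=\bigcap_n\Gamma_n$ one gets $\bC\setminus\Gamma=\bigcup_n(\bC\setminus\Gamma_n)$, and the two increasing unions $\bigcup_n\Omega_1^n$ and $\bigcup_n\Omega_2^n$ are disjoint connected open sets, one bounded and one unbounded, hence are exactly $\Omega_1$ and $\Omega_2$. Then $\sigma(0)\in\Omega_1$ gives $\sigma(0)\in\Omega_1^n$ for all large $n$, and $\sigma(1)\in\Omega_2$ gives $\sigma(1)\in\Omega_2^n$ for all large $n$; any common $n$ works. No distance estimates are needed, and you are correct that rectifiability plays no role here. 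The paper instead simply cites that $d_n\to0$, which is the quantitative shadow of the same collapsing phenomenon; your (corrected) argument is a more careful point-set version of the same idea.
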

\begin{proof}
This follows from the fact that $(\ref{dn})$ tends to $0$ as $n \ra \infty$.
\end{proof}

We will always parameterize $\sigma$ crossing $\Gamma$ so that $\sigma(0)$ is in the bounded complementary component of $\Gamma$. The following lemma is easy to visualize, but its proof is cumbersome. The idea is that if $\sigma$ has endpoints in distinct complementary components of $\Gamma_n$, to cross $\Gamma$ at exactly one point, $\sigma$ is forced to remain in the part of $\Gamma_n$ we removed to construct $\Gamma_{n+1}$, otherwise it will cross more than once. Therefore, such a $\sigma$ must remain inside of one subdivided rectangle of $\Gamma_n$, so that it must completely cross a $v_n$-slab. See Figure \ref{StraightParameterization} and Figure \ref{RookSlab}.

\begin{lemma}
\label{crossingslabs}
If $\sigma$ is a rectifiable arc which crosses $\Gamma$ and $\cP(\sigma)$ has only one point, then $\sigma$ must enter every rook contained in some $v_n$-slab of some subdivided rectangle of a straight piece of $\Gamma_n$. Moreover, such a $v_n$-slab may be chosen so that it does not intersect the boundary of $\Gamma_n$.
\end{lemma}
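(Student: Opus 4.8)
The plan is to argue by contradiction, leveraging the structure of $\Gamma$ as the nested intersection $\bigcap \Gamma_n$ and the fact (Lemma \ref{MustCross}) that $\sigma$ eventually separates the complementary components of each $\Gamma_n$. So fix a rectifiable $\sigma$ crossing $\Gamma$ with $\cP(\sigma)$ a single point $x$, and fix $n$ large enough (using Lemma \ref{MustCross}) that $\sigma(0)$ and $\sigma(1)$ lie in distinct complementary components of $\Gamma_n$; by passing to a larger $n$ we may assume $n$ is large enough that all of the ``sufficiently large $n$'' lemmas of Section 5 apply. I would first reduce to a connected piece: since $\sigma$ crosses $\Gamma_n$, there is a connected subarc $\sigma_n$ of $\sigma$ whose endpoints lie on the two boundary components $\gamma_0^n$ and $\gamma_1^n$ of the annulus $\Gamma_n$ and whose interior stays inside the open annulus $\Gamma_n^\circ$; we may further trim so that $\sigma_n$ meets each boundary component only at its endpoint. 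The point $x$, being in $\cP(\sigma)$, must lie in $\Gamma = \bigcap \Gamma_m \subset \Gamma_{n+1}$, in particular inside the thin plumbing $\Gamma_{n+1}$; and since $\cP(\sigma)=\{x\}$, every point of $\sigma \setminus \{x\}$ lies in one of the two \emph{open} complementary components of $\Gamma$.

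The heart of the argument is topological: $\sigma_n$ must be contained, except at the single point $x$, in the closure of $\Gamma_n \setminus \Gamma_{n+1}$ — i.e.\ in the part of the annulus $\Gamma_n$ that was ``removed'' when passing to $\Gamma_{n+1}$. Indeed, $\Gamma_n \setminus \Gamma_{n+1}$ is a union of two pieces, one attached to each boundary component of $\Gamma_n$, corresponding to the two open complementary components of $\Gamma_{n+1}$ (hence of $\Gamma$); call them $U_0$ (touching $\gamma_0^n$) and $U_1$ (touching $\gamma_1^n$). If a point $y \ne x$ of $\sigma_n$ were to lie in $\Gamma_{n+1}$, then since $y \notin \cP(\sigma)$ there is a ball around $y$ meeting only one complementary component of $\Gamma$; but $\sigma_n$ connects a point of $U_0$ to a point of $U_1$, which lie in \emph{different} complementary components of $\Gamma$, so along $\sigma_n$ there must be a transition — and that transition point is a piercing point, forcing it to be $x$. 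A careful version of this, phrased via the decomposition $A = \sigma^{-1}(\Omega_1)$, $B=\sigma^{-1}(\Omega_2)$ as in Lemma \ref{Cantor}, shows that $\sigma_n \setminus\{x\}$ lies entirely in $\overline{U_0} \cup \overline{U_1}$ and that on one side of $x$ it is in $\overline{U_0}$ and on the other in $\overline{U_1}$; in particular $\sigma_n$ never enters the core of any $v_n$-slab except possibly at $x$ itself. The \emph{complement} of $\overline{U_0}\cup\overline{U_1}$ within a straight piece of $\Gamma_n$ is exactly a disjoint union of (interiors of) subdivided rectangles together with the vertical and horizontal slits we inserted; so near $x$, the arc $\sigma_n$ is trapped inside the closure of a single subdivided rectangle $R$ of a single straight piece of $\Gamma_n$ (a corner piece would not do, since crossing from $\gamma_0^n$ to $\gamma_1^n$ through the removed region forces $\sigma_n$ to traverse the width of a straight piece). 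To pass from $\overline{U_0}$ to $\overline{U_1}$ inside $R$, the arc must cross every foliation-level from $\gamma^n_0$ to $\gamma^n_1$, and in particular it must traverse completely at least one $v_n$-slab of $R$ — and since the rook placement inside that slab was chosen on a torus with no two rooks within the $8$ neighboring squares, the vertical segments do not separate the slab, so a connected crossing arc that avoids the removed region is forced to pass through \emph{every} rook of that slab.

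The last sentence of the statement — that the $v_n$-slab can be chosen disjoint from $\partial \Gamma_n$ — I would handle by a pigeonhole/margin argument: each subdivided rectangle $R$ is subdivided into roughly $\lambda_n = w_n/(\delta_n d_n)$ many $v_n$-squares stacked between $\gamma_0^n$ and $\gamma_1^n$, and by (\ref{biggerthanone}) this number is at least $100$-ish (certainly $\geq 3$), so the ``middle'' $v_n$-slabs of $R$ are at definite distance from both boundary components of $\Gamma_n$; since $\sigma_n$ must cross \emph{all} of these slabs on its way from $\gamma_0^n$ to $\gamma_1^n$, it in particular crosses one of the interior ones. I expect the main obstacle to be making the topological ``$\sigma$ must stay in the removed region'' step fully rigorous rather than merely pictorially convincing: one has to argue carefully that a single transition point $x$ cannot ``absorb'' multiple crossings, i.e.\ that if $\sigma_n$ dipped into $\Gamma_{n+1}$ and came back out, it would necessarily produce a piercing point distinct from $x$ (or, dually, that $x \in \Gamma$ together with $\cP(\sigma)=\{x\}$ forces $\sigma$ on each side of $x$ to be eventually trapped in one fixed complementary component of $\Gamma$). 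This is where I would invoke the characterization $\cP(\sigma) = \sigma(\overline A \cap \overline B)$ from the proof of Lemma \ref{Cantor}, together with connectedness of $\sigma_n$ and of each of $U_0, U_1$, to pin down exactly which side of $x$ goes to which component and conclude that the slit structure cannot be entered away from $x$.
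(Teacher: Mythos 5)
Your intuition about the mechanism is right---the only way to descend through the rows of a $v_n$-slab without piercing $\Gamma$ is through the gaps in the horizontal walls, and those gaps sit inside the rooks---but the scaffolding you build to reach that point contains a genuinely false step. You claim that $\sigma_n \setminus \{x\}$ is confined to $\overline{U_0} \cup \overline{U_1}$, i.e.\ that away from the piercing point $\sigma$ never enters $\Gamma_{n+1}$. This is not true: $\Gamma_{n+1} \setminus \Gamma$ is nonempty and every point of it lies in $\Omega_1$ or $\Omega_2$, so $\sigma$ may freely dip into $\Gamma_{n+1}$ (for instance into the inner component of $\Gamma_{n+1} \setminus \Gamma_{n+2}$, which is contained in $\Omega_1$) and come back out without creating any piercing point. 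Your own justification only shows that any $\Omega_1$-to-$\Omega_2$ transition along $\sigma$ occurs at $x$; it does not show that $\sigma$ avoids $\Gamma_{n+1}$. The downstream steps inherit the problem: there is no reason $\sigma$ should be ``trapped in a single subdivided rectangle near $x$'' (the sets $U_0$ and $U_1$ are connected and wind through the entire annulus, and the thin $\Omega_1$-channel abutting $x$ typically extends into neighboring subdivided rectangles), and passing from $\overline{U_0}$ to $\overline{U_1}$ does not require crossing every foliation level---the two regions interleave like combs, so each contains points at essentially every level, and the transition between them happens through the wall $\Gamma_{n+1}$ at $x$ rather than by a vertical traversal of one rectangle.

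The paper's proof sidesteps confinement entirely and argues by contradiction with a counting-of-piercings device: if in some slab $\sigma$ enters at least one but not all of the rooks, then $\sigma$ must cross an unbroken stretch of horizontal wall between two consecutive foliation elements, and hence must pierce $\Gamma$ in or adjacent to that subdivided rectangle. If no slab has all of its rooks entered, this happens both for a slab on the bounded side of the core curve of $\Gamma_n$ and for one on the unbounded side, producing two distinct piercing points and contradicting the assumption that $\cP(\sigma)$ is a single point. This localized ``skipping a rook forces a piercing'' statement, applied twice on opposite sides of the core curve, is the ingredient your argument is missing and cannot be replaced by a global confinement of $\sigma$. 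Your pigeonhole treatment of the final sentence (many slabs per subdivided rectangle, so one may be chosen away from $\partial\Gamma_n$) is sound and matches the paper's.
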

\begin{proof}
By equation $(\ref{vn})$, combined with equation $(\ref{biggerthanone})$, we see that the number of $v_n$-slabs contained in a subdivided rectangle in $\Gamma_n$ tends to $\infty$ as $n \ra \infty$. Therefore, we may always assume that every subdivided rectangle in $\Gamma_n$ has at least one $v_n$-slab in each complementary component of the core curve of $\Gamma_n$ that does not intersect the boundary of $\Gamma_n$.

Suppose that the lemma is false: there is no subdivided rectangle so that $\sigma$ passes through every single rook of some $v_n$-slab contained in that subdivided rectangle. Then there exists a subdivided rectangle $R$ and a $v_n$-slab $S$ contained in $R$ so that $\sigma$ passes through at least one, but not all, of the rooks in $S$. We may assume that $S$ is in the bounded complementary component of the core curve of $\Gamma_n$. Indeed, if no such $v_n$-slab existed, then either $\cP(\sigma)$ has more than one element, or the conclusion of the lemma holds.

In this case, two of the opposite sides of a rook of $S$ that $\sigma$ does not enter are determined by elements of the plumbing foliation $\gamma^n_{j/t_n}$ and $\gamma^n_{(j+2)/t_n}$. Then by the construction of $\Gamma_{n+1}$ from $\Gamma_n$, $\sigma$ must pierce $\Gamma$ at least once between $\gamma^n_{j/t_n}$ and $\gamma^n_{(j+2)/t_n}$. This must happen either in the subdivided rectangle $R$, or one of the subdivided rectangles or corner pieces adjacent to it.

For the exact same reasons as above, there must exist a subdivided rectangle $R'$ and a $v_n$-slab $S'$ contained in $R'$ so that $\sigma$ passes through at least one, but not all of the rooks in the $v_n$-slab, and this $v_n$-slab is in the unbounded complementary component of the core curve of $\Gamma_n$. But then by the same reasoning $\sigma$ must pierce $\Gamma$ again, contradicting the fact that $\cP(\sigma)$ has only one point.
\end{proof}

The statement and proof of Lemma \ref{crossingslabs} apply in exactly the same way if $v_n$-slabs are replaced by $v_n$-squares, since equation $(\ref{deltan})$ implies that $\lambda_n$, the number of $v_n$-squares contained inside of a subdivided rectangle, also tends to $\infty$ as $n \ra \infty$. This means we may assume there are many $v_n$-squares in the bounded and unbounded complementary components of the core curve of $\Gamma_n$, and the reasoning of the proof still applies.

In fact, since the number of $v_n$-slabs and $v_n$-squares tends to $\infty$, for large enough $n$, the reasoning of Lemma \ref{crossingslabs} allows us to conclude that there are at least three adjacent $v_n$-slabs or $v_n$-squares so that $\sigma$ enters ever single rook of all three $v_n$-slabs or $v_n$-squares. We will use this observation in the basic estimates below.
\begin{lemma}
\label{blockcross}
Suppose that $\sigma$ crosses $\Gamma$ and $\cP(\sigma)$ only contains one point. Then there exists a $v_n$-slab $R$ in some $\Gamma_n$ so that
\begin{enumerate}
	\item $\sigma$ passes through every rook in the $v_n$-slab $R$
	\item $\sigma$ passes through every rook in the adjacent $v_n$-slabs above and below $R$
\end{enumerate}
Moreover, we have
$$l(\sigma \cap R) > \delta_n d_n.$$
\end{lemma}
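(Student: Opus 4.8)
The plan is to treat the three assertions of the lemma separately. Conclusions (1) and (2) require essentially no new work: by the observation following Lemma~\ref{crossingslabs}, once $n$ is large the argument of that lemma forces $\sigma$ to pass through every rook of three consecutive $v_n$-slabs that lie inside a single subdivided rectangle of a straight piece of $\Gamma_n$ and do not meet $\pa\Gamma_n$. I take $R$ to be the middle of these three slabs. So the real content is the length estimate $l(\sigma\cap R)>\delta_n d_n$, and the rest of the proof is devoted to it.

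First I would record the crude lower bound. The $v_n$ rooks of $R$ occupy the $v_n$ distinct columns of its $v_n\times v_n$ grid, and these columns are vertical strips partitioning the base of the subdivided rectangle containing $R$, each of width comparable to $\delta_n d_n/v_n$. Since $\sigma$ enters every rook, the trace $\sigma\cap R$ meets each column; hence the horizontal coordinate along $\sigma\cap R$ has total variation at least the distance from the leftmost to the rightmost column, which is $(1-O(1/v_n))$ times the base of $R$. As $l(\sigma\cap R)$ dominates the total variation of either coordinate, this already gives $l(\sigma\cap R)\geq(1-o(1))\,\delta_n d_n$ — comparable to, but not yet strictly larger than, $\delta_n d_n$.

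To upgrade to the strict inequality I would exploit the structure that has not yet been used. By conclusion (2), $\sigma$ enters $R$ from the $v_n$-slab directly above and exits into the one directly below, so $\sigma\cap R$ contains a subarc crossing both the top and the bottom foliation line of $R$; and each such crossing must pass through one of the narrow gaps that were cut in the foliation lines near a rook, gaps that sit essentially at column centers. Now the core curve of $\Gamma_{n+1}$ sweeps completely across $R$ twice — once on the way down and once on the way back up, weaving between the rook segments, as illustrated in Figure~\ref{NewCoreCurve} — and since $\sigma$ has a single piercing point it is confined to one side of $\Gamma$ throughout $R$; in the manner of a Lakes of Wada obstruction, this forces $\sigma$ to shadow both sweeps in order to reach all $v_n$ rooks, contributing a definite multiple (close to twice, and more because of the weaving) of the base of $R$. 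One can also argue more combinatorially: if $\sigma$ enters and exits $R$ through interior columns, the detour out to the rooks in columns $1$ and $v_n$ already produces the surplus; and if it enters near one extreme column and exits near the other, the good rook placement (no two rooks $8$-adjacent, so the extreme rooks lie in non-adjacent rows) together with the surviving foliation segments obstructs a monotone left-to-right traversal and forces an extra horizontal excursion. Either way $l(\sigma\cap R)>\delta_n d_n$.

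The main obstacle is exactly this last step. The crude bound falls just short of the claimed inequality, so everything hinges on extracting a genuine additional $\Omega(\delta_n d_n)$ of length, and that in turn rests on a careful topological description of how a curve with only one piercing point must thread the comb-like maze formed inside $R$ by the rook segments and the intact pieces of the plumbing foliation, together with the bookkeeping needed to ensure that the many $O(w_n/t_n)$-scale quantities in the construction — the widths of the foliation gaps, the thickenings $\eta_n$, and the slack allowed in the column widths and in the widths of the subdivided rectangles — are too small to erode the surplus.
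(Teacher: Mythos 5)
Your identification of $R$ as the middle of three consecutive all-rook slabs matches the paper's setup, and your ``crude bound'' is sound as far as it goes. The gap is in the length estimate, and you have located it yourself: your argument never uses the defining property of a \emph{good} rook placement (Lemma \ref{rook}), namely that rooks in consecutive rows of the grid (and, via the torus identification, across the top and bottom of a slab) are never in adjacent columns, hence are separated horizontally by at least one column width $\delta_n d_n/v_n$. That property is what the paper's proof runs on. Because the surviving pieces of the plumbing foliation wall off consecutive rows except near the rooks, $\sigma$ must thread the rooks row by row on its way from the slab above to the slab below; each of the roughly $v_n$ row-to-row transitions therefore costs a horizontal excursion of length at least $\delta_n d_n/v_n$ \emph{and} a vertical drop of $2w_n/t_n$, and summing gives $l(\sigma\cap R)\geq v_n\cdot\tfrac{2w_n}{t_n}+v_n\cdot\tfrac{\delta_n d_n}{v_n}>\delta_n d_n$. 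Your substitute---that the rooks occupy all $v_n$ columns, so $\sigma$ spans the base once---is true of \emph{any} rook placement and yields only $(1-O(1/v_n))\,\delta_n d_n$; adding the vertical variation cannot close the deficit, since the slab height $2v_nw_n/t_n=\delta_n d_n/k_n$ is a vanishing fraction of $\delta_n d_n$ by (\ref{kn}), and combining the two coordinate variations into a length bound loses a factor of $\sqrt2$.

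The proposed upgrades do not repair this. The ``double sweep'' claim---that $\sigma$ must shadow both the downward and the upward pass of the core curve of $\Gamma_{n+1}$---is not justified and asks for more than Lemma \ref{crossingslabs} provides: entering a rook does not require circumnavigating the vertical segment placed in it, so nothing forces $\sigma$ to traverse the slab's base twice. The alternative case analysis on entry and exit columns is only sketched, and to produce a quantified surplus it would in any case need the consecutive-row separation. The fix is simply to replace the ``span all columns'' bound by the row-by-row zigzag bound above.
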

\begin{proof}
The proof is a picture; see Figure \ref{RookSlab}. The distance $\sigma$ must travel to go between two rooks in consecutive rows in the rook placement is at least $\delta_n d_n/v_n$ distance apart by Lemma \ref{rook}. $\sigma$ must also cross from the top of $R$ to the bottom of $R$, contributing a length of at least $v_n \cdot \frac{2w_n}{t_n}$. So we calculate
$$l(\sigma \cap R) \geq v_n \cdot \frac{2w_n}{t_n} + v_n \cdot \frac{\delta_n d_n}{v_n} > \delta_n d_n.$$
This proves the claim.
\end{proof}

\begin{figure}[!h]
	\centerline{ \includegraphics[height=3in,width=4in]{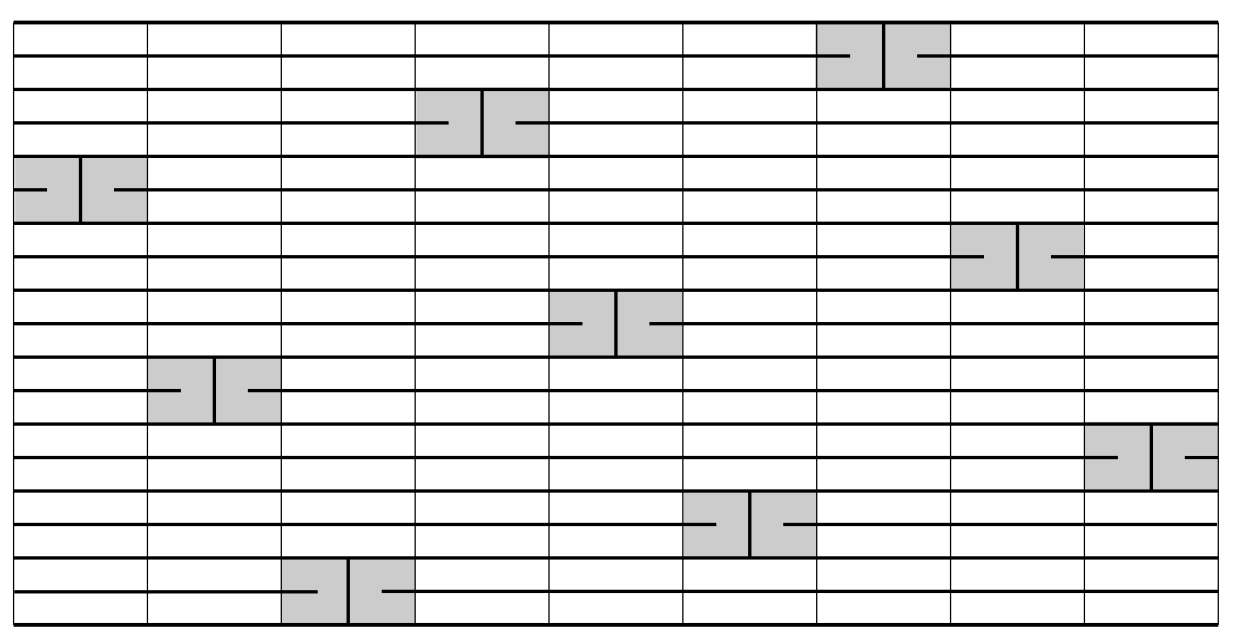} }
	\caption{Illustrating Lemma \ref{blockcross}. We have a $v_n$-slab that is a subset of $\Gamma_{n}$, along with the thin boundary rectangles that form part of the boundary of $\Gamma_{n+1}$. If $\sigma$ crosses $\Gamma_n$ without crossing $\Gamma$, it must enter all the rook squares (shaded in gray) of every $v_n$-slab it passes through.}
	\label{RookSlab}
\end{figure}

\begin{corollary}
	\label{Square1}
Suppose that $\sigma$ crosses $\Gamma$ and $\cP(\sigma)$ only contains one point. Then there exists a $v_n$-square $R$ in some $\Gamma_n$ so that 
\begin{enumerate}
	\item $\sigma$ passes through every rook in the $v_n$-square $R$.
	\item $\sigma$ passes through and every rook of the $v_n$-square above and below $R$.
\end{enumerate}
Moreover, we have
$$l(\sigma \cap R) > 1.$$
\end{corollary}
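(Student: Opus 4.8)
The plan is to upgrade Lemma \ref{blockcross} from $v_n$-slabs to $v_n$-squares and then to sum the per-slab bound over the $\approx k_n$ many $v_n$-slabs that compose a $v_n$-square. For conclusions (1) and (2): as already observed right after Lemma \ref{crossingslabs}, since the number of $v_n$-squares in a subdivided rectangle tends to infinity (by (\ref{lambdan}) and (\ref{deltan})), the reasoning of Lemma \ref{crossingslabs} produces, for all large $n$, three consecutive $v_n$-squares inside a single subdivided rectangle of a straight piece of $\Gamma_n$, not meeting $\pa\Gamma_n$, such that $\sigma$ enters every rook of all three. I would take $R$ to be the middle one.

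The key remaining point is that Lemma \ref{blockcross} then applies to \emph{every} $v_n$-slab $R'$ composing $R$. The rooks of $R'$ are among those of $R$, so $\sigma$ enters all of them; and $\sigma$ enters every rook of the $v_n$-slabs immediately above and below $R'$, since for a slab interior to $R$ those two neighbours also lie in $R$, while for the top (resp.\ bottom) slab of $R$ the slab directly above (resp.\ below) it is the bottom (resp.\ top) slab of the $v_n$-square above (resp.\ below) $R$, whose rooks $\sigma$ enters by (2). This use of clause (2) is the only delicate bookkeeping in the argument. Hence Lemma \ref{blockcross} gives $l(\sigma\cap R')\ge v_n\cdot\tfrac{2w_n}{t_n}+\delta_n d_n$ for each of the $\approx k_n$ slabs $R'\subset R$.

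Finally, these slabs are pairwise disjoint, so their $\sigma$-lengths add; and by the defining relation (\ref{kn}) of a $v_n$-square their heights $v_n\cdot\tfrac{2w_n}{t_n}$ sum to $\delta_n d_n$. Therefore $l(\sigma\cap R)\ge \delta_n d_n + k_n\,\delta_n d_n$, and since $k_n\,\delta_n d_n=\tfrac{t_n}{v_n}\cdot\tfrac{(\delta_n d_n)^2}{2w_n}\ge 1$ by the choice of $t_n$ in (\ref{biggerthanone}), we conclude $l(\sigma\cap R)\ge \delta_n d_n+1>1$. I do not anticipate a genuine obstacle; apart from the boundary-slab bookkeeping, the only thing to be careful about is pinning down the exact slab count hidden in the "$\approx k_n$", and there is slack for this both in the surviving $\delta_n d_n$ term and in the freedom to take $t_n$ still larger in (\ref{biggerthanone}).
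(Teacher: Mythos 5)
Your proposal is correct and follows essentially the same route as the paper: apply Lemma \ref{blockcross} to each of the $k_n$ many $v_n$-slabs composing the $v_n$-square and sum, using (\ref{kn}) and (\ref{biggerthanone}) to get $k_n\,\delta_n d_n \ge 1$. Your boundary-slab bookkeeping (using the adjacent $v_n$-squares to verify the hypotheses of Lemma \ref{blockcross} for the top and bottom slabs of $R$) is a point the paper passes over silently, and your displayed bound $k_n\delta_n d_n$ is the correct form of the quantity the paper misprints as $k_n\delta_n v_n$.
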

\begin{proof}
To cross $R$ in such a way, one must pass through all the rooks in $k_n$ many $v_n$-slabs without piercing $\Gamma$. Therefore by repeated applications of Lemma \ref{blockcross}, and then using (\ref{biggerthanone}), we have
$$l(\sigma \cap R) \geq k_n \delta_n v_n = \frac{t_n}{v_n} \cdot \frac{(\delta_n d_n)^2}{2 w_n} > 1.$$ 
\end{proof}

\begin{corollary}
\label{Nonpierceable}
$\Gamma$ is non-pierceable by rectifiable arcs. 
\end{corollary}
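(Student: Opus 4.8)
The plan is to obtain this statement as an essentially immediate consequence of Corollary~\ref{Square1}, after using Lemma~\ref{Cantor} to reduce to a convenient class of test arcs. First I would argue by contradiction: assume $\Gamma$ \emph{is} pierceable by some rectifiable arc. By the equivalence $(1)\Leftrightarrow(3)$ in Lemma~\ref{Cantor}, $\Gamma$ is then pierceable by a \emph{simple} rectifiable arc $\sigma$ of length $l(\sigma)\leq 1$; unwinding the definition of pierceability, this $\sigma$ crosses $\Gamma$ and $\cP(\sigma)$ consists of exactly one point.

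The second step is to feed this $\sigma$ directly into Corollary~\ref{Square1}: since $\sigma$ crosses $\Gamma$ and $\cP(\sigma)$ is a single point, there exist an index $n$ and a $v_n$-square $R\subset\Gamma_n$ with $l(\sigma\cap R)>1$. The final step is to observe that the portion of $\sigma$ lying inside $R$ contributes at most the total length of $\sigma$, i.e.
$$1 < l(\sigma\cap R) \leq l(\sigma) \leq 1,$$
which is absurd. Hence no such $\sigma$ exists and $\Gamma$ is non-pierceable by rectifiable arcs.

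I do not expect a genuine obstacle here: all the geometry has already been done in Sections 3--5 and in Lemmas~\ref{crossingslabs} and~\ref{blockcross} and Corollary~\ref{Square1}, so this is bookkeeping. The only point requiring a moment's care is the inequality $l(\sigma\cap R)\leq l(\sigma)$ used in the last display; this is why I route the argument through part (3) of Lemma~\ref{Cantor} rather than working with an arbitrary (possibly non-simple) rectifiable arc, so that $l(\sigma)=H^1(\sigma([0,1]))$ and monotonicity of $H^1$ on subsets of the trace applies cleanly. An alternative would be to track a single injective subarc of the original rectifiable arc through the nested $v_n$-squares, but invoking Lemma~\ref{Cantor} keeps the argument short.
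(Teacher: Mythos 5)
Your argument is correct and is exactly the paper's proof: reduce via the equivalence $(1)\Leftrightarrow(3)$ of Lemma~\ref{Cantor} to simple rectifiable arcs of length at most $1$, then derive a contradiction from Corollary~\ref{Square1} since $l(\sigma\cap R)>1$ is incompatible with $l(\sigma)\leq 1$. The paper states this more tersely, but the logic is identical.
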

\begin{proof}
The follows from immediately from Lemma \ref{Cantor}, since Lemma \ref{Square1} shows that $\Gamma$ is non-piercable by simple rectifiable arcs with length bounded by $1$.
\end{proof}

\section{Any crossing rectifiable curve intersects on positive length}
We would like to focus on a convenient subarc of $\sigma$, which exists by the following lemma.
\begin{lemma}
Suppose $\sigma$ is simple and rectifiable with length $\leq 1$ and crosses $\Gamma$. Then there exists an integer $n \geq 0$ and a straight piece $P$ of $\Gamma_n$ so that $\sigma$ enters the top of $P$ and exits out the bottom of $P$. 
\end{lemma}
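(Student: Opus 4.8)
The plan is first to reduce to a crossing of a single solid plumbing $\Gamma_n$. By Lemma~\ref{MustCross}, together with the elementary fact that $\Omega_1 = \bigcup_n V_n$ and $\Omega_2 = \bigcup_n U_n$, where $V_n$ and $U_n$ denote the bounded and unbounded complementary components of $\Gamma_n$, there is an $N$ so that $\sigma(0) \in V_n$ and $\sigma(1) \in U_n$ for all $n \ge N$. For such $n$ put $a_n = \sup\{ t : \sigma(t) \in \overline{V_n}\}$ and $b_n = \inf\{ t > a_n : \sigma(t) \in \overline{U_n}\}$; then $\sigma(a_n) \in \gamma^n_1$, $\sigma(b_n) \in \gamma^n_0$, and $\sigma((a_n,b_n)) \subset \mathrm{int}(\Gamma_n)$, so that $\sigma|_{[a_n,b_n]}$ is an excursion across the annulus $\Gamma_n$. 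Because $V_n \subset V_{n+1}$ and $U_n \subset U_{n+1}$ these intervals are nested, $[a_{n+1},b_{n+1}] \subset [a_n,b_n]$, so they decrease to a closed interval $[a_\infty, b_\infty]$ on whose interior $\sigma$ takes values in $\bigcap_n \mathrm{int}(\Gamma_n) \subset \Gamma$. If $a_\infty < b_\infty$ then $\sigma|_{[a_\infty,b_\infty]}$ is a nondegenerate rectifiable simple subarc contained in $\Gamma$, so $H^1(\sigma \cap \Gamma) > 0$ and the conclusion of Theorem~\ref{Main} already holds for $\sigma$; hence it suffices to treat the case $a_\infty = b_\infty =: t_\infty$, in which the excursions $\sigma|_{[a_n,b_n]}$ shrink to the single point $x := \sigma(t_\infty) \in \Gamma$.

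I would then prove that for all large $n$ the excursion $\sigma|_{[a_n,b_n]}$ enters some straight piece $P$ of $\Gamma_n$ through its top and leaves through its bottom without leaving $\overline{P}$ in between; this $P$ and this $n$ are the ones asserted by the lemma. On the open interval $(a_n,b_n)$ the curve avoids both boundary curves of $\Gamma_n$, hence it never meets the top or bottom of any straight piece except possibly at $t = a_n$ or $t = b_n$; I would trace the excursion piece by piece, recording each passage through an opening into an adjacent piece. If $\sigma(a_n)$ lies on the top of a straight piece $P$, then just after $a_n$ the curve is in $\mathrm{int}(P)$, and the first time it exits $\mathrm{int}(P)$ it does so either through the bottom of $P$---and that time is forced to be $b_n$, so we are done---or through an opening into a corner piece; a symmetric analysis applies near $b_n$. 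The only way the conclusion can fail at level $n$ is for the excursion to cross $\Gamma_n$ entirely through corner pieces and openings, never passing from a top to a bottom, so we may assume for contradiction that this happens at every level $n \ge N$. (Two degenerate subcases---$\sigma(a_n)$ or $\sigma(b_n)$ being the inner corner of a corner piece, and the excursion running tangentially along a foliation curve instead of crossing it transversally---I would handle by a short limiting argument using that the excursions shrink to $x$.)

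The heart of the argument, and the step I expect to be the main obstacle, is ruling out this corner-only behaviour at all levels simultaneously. The structural input is the asymmetry in the passage from $\Gamma_n$ to $\Gamma_{n+1}$: inside a corner piece of $\Gamma_n$ the new plumbing consists only of the thin thickened foliation $L$-shapes, which are nested parallel tubes that link up to one another only after leaving the corner piece, whereas inside a straight piece of $\Gamma_n$ the new plumbing also contains the vertical rook-segments and the gapped horizontal foliation pieces. Thus if the level-$n$ excursion stays in corners, the level-$(n+1)$ excursion is squeezed through the nested $L$-tubes of $\Gamma_{n+1}$ together with the snaking tubes of $\Gamma_{n+1}$ inside the straight pieces of $\Gamma_n$ adjacent to those corners, and passing from the innermost of these tubes to the outermost forces it to traverse a vertical rook-segment the short way---that is, to cross a straight piece of $\Gamma_{n+1}$ from top to bottom. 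Making this implication precise is delicate, since purely topologically an excursion can cross a corner piece while meeting only corner pieces at the next level (by threading the $L$-bends along a diagonal); closing this gap is exactly where one must use either the parity conditions imposed on the $U$- and $Z$-junctions, which are designed to obstruct such threading, or the metric bound $l(\sigma) \le 1$ weighed against the lengths built into the construction, so that an infinite regress of corner-only crossings cannot occur. Organising this bookkeeping is the bulk of the proof.
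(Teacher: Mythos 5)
Your proposal is not yet a proof: you explicitly flag the decisive step --- ruling out a crossing of $\Gamma_n$ that never passes from the top to the bottom of a single straight piece of the next plumbing --- as ``the main obstacle,'' and you offer two candidate mechanisms (junction parity, or the length bound) without carrying either one out. That step is the entire content of the lemma; the excursion bookkeeping in your first two paragraphs is routine by comparison. The paper closes the gap with the quantitative machinery already in place: once $\sigma$ crosses $\Gamma_n$ (Lemma \ref{MustCross}), if it never enters the top and exits the bottom of one straight piece of $\Gamma_{n+1}$, then it must thread through the gaps left between the thickened segments, and the reasoning of Lemma \ref{crossingslabs} and Lemma \ref{blockcross} (culminating in Corollary \ref{Square1}) forces it to enter every rook of a full $v_n$-square. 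The good rook placement puts consecutive gaps at horizontal distance at least $\delta_n d_n/v_n$ apart, so by (\ref{biggerthanone}) this costs length greater than $1$, contradicting $l(\sigma)\le 1$. So of your two candidate mechanisms, it is the metric one --- applied at a \emph{single} level $n$ --- that does the work; there is no infinite regress over all levels to organise, and the $U$/$Z$-junction parity is not the relevant obstruction here (it is imposed so that the inserted segments assemble into a connected plumbing, not to prevent diagonal threading). Your instinct that a purely topological argument cannot suffice is correct, but the proposal stops exactly where the proof has to begin.

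A secondary point: in your reduction, the case $a_\infty<b_\infty$ gives a nondegenerate subarc of $\sigma$ inside $\Gamma$ and hence $H^1(\sigma\cap\Gamma)>0$, which is the conclusion of Theorem \ref{Main} but not the conclusion of the lemma as stated. If you take that shortcut you must either restate the lemma as a dichotomy or absorb that case directly into the proof of the main theorem; as written, the lemma asserts unconditionally that some straight piece is crossed top to bottom.
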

\begin{proof}
By Lemma \ref{MustCross}, $\sigma$ must cross $\Gamma_n$ for some $n$. If $\sigma$ did not enter the top and exit out of the bottom of the same straight piece $P$ for $\Gamma_{n+1}$, then similar reasoning to Lemma \ref{crossingslabs} and Lemma \ref{blockcross} would imply that $\sigma$ must enter too many rook squares and must have length greater than $1$.
\end{proof}

We will show that the subarc that begins at the top of a straight piece and exits at the bottom of a straight piece intersects with positive length. We first consider the special case that this subarc is a line segment.

\begin{lemma}
	\label{segment}
If $\sigma$ is a axis aligned line segment that crosses $\Gamma$, then 
$$H^1(\sigma \cap \Gamma) > 0.$$
\end{lemma}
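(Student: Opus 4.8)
\emph{Proof strategy.} We may take $\sigma$ vertical; the horizontal case is entirely analogous, with the roles of horizontal and vertical straight pieces interchanged (a horizontal straight piece of $\Gamma_n$ receives vertical rook segments and a vertical one receives horizontal rook segments, so the two situations are mirror images of each other). If $\sigma\cap\Gamma$ contains a non-degenerate subinterval of $\sigma$ we are already done, so assume it does not; a standard connectedness argument (pick a point of $\overline{\sigma^{-1}(\Omega_1)}\cap\overline{\sigma^{-1}(\Omega_2)}$ and short parameters on either side of it) then produces a subsegment of $\sigma$ of length $\le 1$ that still crosses $\Gamma$, and we replace $\sigma$ by it. Applying the previous lemma gives an integer $n_0$ and a straight piece $P_{n_0}$ of $\Gamma_{n_0}$ with $\sigma$ entering the top and exiting the bottom of $P_{n_0}$; since $\sigma$ is vertical, the boundary sides of $P_{n_0}$ must be horizontal, so $\sigma\cap P_{n_0}$ is a single subsegment $J_0$ of $\sigma$ of length $\mathrm{width}(P_{n_0})>0$ which $\sigma$ crosses transversally.

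Now set $\ell_n:=H^1(\sigma(J_0)\cap\Gamma_n)$ for $n\ge n_0$, so $\ell_{n_0}=\mathrm{width}(P_{n_0})>0$ and $(\ell_n)$ is nonincreasing and bounded by $|J_0|<\infty$. The sets $\sigma(J_0)\cap\Gamma_n$ are compact and decrease to $\sigma(J_0)\cap\Gamma$, so by continuity from above of $H^1$ restricted to the segment $\sigma(J_0)$ we get $H^1(\sigma\cap\Gamma)\ge H^1(\sigma(J_0)\cap\Gamma)=\lim_n\ell_n$. Thus it suffices to prove $\ell_{n+1}\ge(1-\epsilon_n)\ell_n$ for a nonnegative summable sequence $\epsilon_n$. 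I will show by induction on $n$ that $\sigma(J_0)\cap\Gamma_n$ is a finite union of subsegments of $\sigma$, each of which is a transversal crossing, from one boundary side to the other, of a straight piece of $\Gamma_n$ with horizontal boundary sides, and I bound the loss incurred at each refinement.

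Recall from Section 4 that $\Gamma_{n+1}\subseteq\Gamma_n$ is obtained by deleting a finite family of thin axis-aligned rectangles of width $\eta_n$: the thickened plumbing-foliation segments (at most $t_n$ of them crossing any straight piece, punched out near the rooks so that $\Gamma_{n+1}$ remains connected) and the thickened rook segments (placed at the column-centers of the subdivided rectangles, each of height about two foliation gaps). Let $J$ be one of the transversal crossings, say of a horizontal straight piece $P$, so $|J|=\mathrm{width}(P)$. The vertical segment $\sigma$ meets each thickened foliation segment transversally, so these delete at most $t_n\eta_n$ from $J$, which by $(\ref{etan})$ and $(\ref{wntn})$ (together with the fact that $\mathrm{width}(P)$ loses only a factor $\asymp t_j$ at each of the levels $n_0,\dots,n-1$) is at most $c_n|J|$ for a summable sequence $c_n$ depending only on $P_{n_0}$. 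The thickened rook segments are vertical, hence parallel to $\sigma$; since the rook placement is repeated slab by slab, every rook segment whose $x$-extent contains the $x$-coordinate of $\sigma$ lies in a single column of a single subdivided rectangle of $P$, there are about $t_n/(2v_n)$ of them (one per $v_n$-slab), each of height about $2\,\mathrm{width}(P)/t_n$, so together they delete at most $\mathrm{width}(P)/v_n=|J|/v_n$ from $J$. Hence the total loss on $J$ is at most $\bigl(1/v_n+c_n\bigr)|J|$, and the surviving parts of $J$ are again transversal crossings of straight pieces of $\Gamma_{n+1}$ with horizontal boundary sides (the un-deleted foliation strips), so the induction continues with $\epsilon_n:=1/v_n+c_n$. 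By $(\ref{vn})$ this sequence is summable, so $\lim_n\ell_n\ge\ell_{n_0}\prod_{n\ge n_0}(1-\epsilon_n)>0$, which proves the lemma.

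The crux is this last estimate: one must verify that a vertical segment can lose a non-negligible proportion of its length in a single refinement \emph{only} by running alongside a column of (vertical) rook segments, and that this costs merely the fraction $1/v_n$ precisely because those rook segments are short (height $\asymp\mathrm{width}(P)/t_n$) and sparse (one per $v_n$-slab, hence $\asymp t_n/v_n$ of them in a full column). One also has to check the bookkeeping near the punched portions of the foliation segments, near corner pieces, and near the inner and outer boundary curves of $\Gamma_n$, so that these situations contribute no extra loss; in each of these cases the deleted set only \emph{shrinks}, so the bound above still applies.
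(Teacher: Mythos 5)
Your argument is correct and follows essentially the same route as the paper's proof: you decompose the per-stage loss into the contribution of the thickened foliation strips (at most $t_n\eta_n$, negligible by the choice of $\eta_n$) and the contribution of the thickened rook segments in a single column (about $t_n/(2v_n)$ of them, each of height about $2w_n/t_n$, totalling $w_n/v_n$), and then sum over stages using the summability of $1/v_n$. The only difference is cosmetic bookkeeping — you track the loss multiplicatively as $\ell_{n+1}\ge(1-\epsilon_n)\ell_n$ where the paper tracks it additively against the fixed width $w_n$ — so the two proofs are the same in substance.
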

Recall that we always may orient our view so that that this axis aligned line segment is vertical.
\begin{proof}
Let $P$ denote the straight piece that $\sigma$ crosses contained in some $\Gamma_n$. We define 
$$l_m := l(\sigma \cap \Gamma_m).$$
Then $l_m$ is a decreasing sequence and
$$\lim_{m \ra \infty} l_m = H^1(\sigma \cap \Gamma).$$
We will estimate the length lost between each stage, $l_{m} - l_{m+1}$. By our assumptions we have $l_n \geq w_n$.

$\sigma$ passes through $t_n$ many rows in $P$ determined by the plumbing foliation for $\Gamma_n$. The length $l_{n+1}$ can decrease for two reasons. First, if $\sigma$ passes through a rook square, then it may pass through the thickened vertical segment placed in that square, which does not belong to $\Gamma_{n+1}$. The number of rooks a single line segment can pass through is no more than $k_n \lambda_n$ (recall that $k_n$ and $\lambda_n$ are $\ref{kn}$ and $\ref{lambdan}$, respectively). Second, $l_n$ can decrease by $\eta_n$ for every element of the plumbing foliation that $\sigma$ passes through. This means that the amount of length lost can be bounded above by
$$l_{n} - l_{n+1} \leq t_n \cdot \eta_n + k_n\cdot \lambda_n \frac{2w_n}{t_n} \leq \frac{w_n}{100^n \prod_{j=1}^{n-1}t_j} + \frac{w_n}{v_n}.$$
Here we used $(\ref{etan})$, $(\ref{lambdan})$, and $(\ref{kn})$.

Estimating $l_{n+1} - l_{n+2}$ is similar; we just have to count how many straight pieces $\sigma$ crosses in $\Gamma_{n+1}$. This number is no more than $t_n$. Therefore, we can apply the same estimates above and see that
\begin{eqnarray*}
l_{n+1} - l_{n+2} &\leq& t_n \cdot (t_{n+1} \cdot \eta_{n+1}) + t_n \left(\cdot k_{n+1} \cdot \lambda_{n+1}  \cdot \frac{2w_{n+1}}{t_{n+1}} \right) \\
	&\leq& \frac{w_{n+1}}{100^{n+1}\prod_{j=1}^{n-1}t_j} + \frac{t_n w_{n+1}}{v_{n+1}} \\
	&\leq& \frac{w_{n}}{100^{n+1}} + \frac{ w_{n}}{v_{n+1}}
\end{eqnarray*}
In general, we can use this argument to deduce that
\begin{equation}
l_{m} - l_{m+1} \leq w_n \left( \frac{1}{100^{m}} + \frac{1}{v_{m}} \right).
\end{equation}
	
Putting it all together,
\begin{eqnarray*}
l_n - l(\sigma \cap \Gamma) &=& \lim_{m \ra \infty} \sum_{j=n}^m l_{j} - l_{j+1} \\
	&\leq& w_n \sum_{j=n}^{\infty} \left(\frac{1}{100^{j+1}} + \frac{1}{v_j} \right) \\
	&\leq& \frac{w_n}{10}.
\end{eqnarray*}
Therefore, 
$$l(\sigma \cap \Gamma) \geq l_n - w_n/10 \geq \frac{9}{10} w_n.$$
\end{proof}

With a little more care, we can upgrade these observations above to the proof of Theorem 1.1.
\begin{lemma}
	\label{excessrooks}
Suppose $\sigma$ is a simple rectifiable arc with length $\leq 1$ and crosses a straight piece $P$ of $\Gamma_n$. Then $\sigma$ cannot intersect more than $k_n v_n + k_n \lambda_n$ many thickened vertical segments placed in rooks in $\Gamma_n$.
\end{lemma}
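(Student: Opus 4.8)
The goal is to bound the number of thickened vertical segments (placed inside rooks) that a simple rectifiable arc $\sigma$ of length $\leq 1$ can hit while crossing a single straight piece $P$ of $\Gamma_n$. The bound $k_n v_n + k_n \lambda_n$ should be read as: roughly $k_n \lambda_n$ is the count if $\sigma$ behaves like a straight line segment (one hit per $v_n$-square it passes through, since a vertical line meets exactly one rook per $v_n$-slab and there are $k_n$ slabs per square, $\lambda_n$ squares per subdivided rectangle — wait, more carefully, the count $k_n \lambda_n$ from Lemma \ref{segment} is the number of rooks a single vertical segment passes through in the whole straight piece), while the extra $k_n v_n$ term accounts for the ``weaving'' that a general rectifiable arc of bounded length is allowed to do.

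\textbf{Key steps.} First I would fix the straight piece $P$, oriented so it is vertical, and observe that $\sigma$ enters the top of $P$ and exits the bottom (using the lemma just proved about convenient subarcs, or simply restricting to that case). Next, I would decompose $\sigma \cap P$ according to which subdivided rectangle of $P$ it lies in; since the rooks within a $v_n$-slab are separated (by Lemma \ref{rook}) by horizontal distance at least $\delta_n d_n / v_n$ on the torus, each time $\sigma$ moves from one rook to an adjacent rook in the \emph{same} $v_n$-slab it must travel horizontal distance $\geq \delta_n d_n/v_n$, and each time it moves from one $v_n$-slab to the next (vertically) it must travel vertical distance $\geq 2 v_n w_n/t_n$ to cross the slab. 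The core dichotomy: either $\sigma$ traverses a $v_n$-square "monotonically" (entering roughly $k_n$ rooks, one per slab, like a segment), or it weaves back, and each backward weave costs at least $\delta_n d_n/v_n$ of length. Summing: the "segment-like" contribution over all $\lambda_n$ squares in a subdivided rectangle, and over all subdivided rectangles in $P$, is governed by $k_n \lambda_n$, matching Lemma \ref{segment}. The "excess" rooks beyond this — call their number $N$ — each require an extra $\delta_n d_n / v_n$ of horizontal travel (a weave), so $N \cdot \delta_n d_n/v_n \leq l(\sigma) \leq 1$, giving $N \leq v_n / (\delta_n d_n)$. Comparing this with $k_n v_n = \delta_n d_n t_n / (2 w_n) \cdot v_n$ and using the defining inequality (\ref{biggerthanone}), $\frac{t_n}{v_n} \cdot \frac{(\delta_n d_n)^2}{2 w_n} \geq 1$, which rearranges to $\frac{\delta_n d_n t_n}{2 w_n} \geq \frac{v_n}{\delta_n d_n}$, i.e. $k_n v_n \geq N$. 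Hence total hits $\leq k_n \lambda_n + k_n v_n$ — except one must be slightly careful that the "$k_n \lambda_n$" baseline is per straight piece and not per subdivided rectangle; I would track this by noting a single pass through a subdivided rectangle column hits $\leq k_n$ rooks monotonically and there are $\leq \lambda_n$ columns' worth before the baseline is exhausted, which is exactly how Lemma \ref{segment} bounded the segment case.

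\textbf{Main obstacle.} The delicate point is making the word "excess" precise: I need to partition the rooks $\sigma$ enters into a "free" budget of size $\leq k_n \lambda_n$ (the ones it could hit even as a monotone segment) and "paid" rooks that each witness a distinct unit of horizontal back-weaving of length $\geq \delta_n d_n/v_n$, while ensuring no single bit of length is charged twice. The cleanest way is probably to project $\sigma \cap P$ onto the horizontal axis and count, within each subdivided rectangle, the number of rooks entered minus the number of distinct columns; since the rooks of a $v_n$-slab lie in distinct columns, the "minus distinct columns" part recovers the monotone baseline, and any surplus forces the horizontal projection to backtrack over a column boundary, each backtrack costing $\geq \delta_n d_n/v_n$. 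Then the length bound $l(\sigma) \leq 1$ caps the surplus by $v_n/(\delta_n d_n) \leq k_n v_n$ via (\ref{biggerthanone}). I expect the bookkeeping of "distinct columns vs. rooks entered" across slab and subdivided-rectangle boundaries to be the fussiest part, but it is conceptually routine once the projection argument is set up.
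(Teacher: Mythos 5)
Your argument is essentially the paper's: both allot a free budget of one rook per $v_n$-slab (there are $k_n\lambda_n$ slabs per subdivided rectangle, matching the straight-segment count), charge each excess rook a length of order $\delta_n d_n/v_n$ using the separation guaranteed by the good rook placement, and convert the resulting bound $N \lesssim v_n/(\delta_n d_n)$ into $k_n v_n$ via (\ref{biggerthanone}) together with $l(\sigma)\leq 1$. The column-projection bookkeeping you sketch is just a slightly more explicit version of the paper's (equally informal) charging scheme, so there is no substantive difference in approach.
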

\begin{proof}
Since $\sigma $ must pass through at least $k_n \cdot \lambda_n$ many $v_n$-slabs, $\sigma$ can intersect exactly one thickened vertical segment in one rook of each of the $k_n \cdot \lambda_n$-slabs of $\Gamma_n$ without costing any length, just like a vertical segment. Any additional thickened vertical segments that $\sigma$ can intersect come from $\sigma$ going to additional $v_n$-slabs or from $\sigma$ entering multiple thickened vertical segments in other rooks in the same $v_n$-slab.

Suppose that $\sigma$ passes through thickened vertical segments in more than $k_n \cdot \lambda_n$ many $v_n$ slabs. By Lemma \ref{RookPlacement}, each additional slab of this type that $\sigma$ passes through must take at least a length of
$$\frac{\delta_n d_n}{2v_n}.$$
Any additional thickened vertical segments passed through in any of the $v_n$ slabs that $\sigma$ intersects also costs a length of $\frac{\delta_n d_n}{2v_n}$, again by Lemma \ref{RookPlacement}.

This means the amount of additional thickened vertical segments that $\sigma$ can intersect is no more than $2 k_n v_n$, since $(\ref{biggerthanone})$ implies that
$$k_n \delta_n d_n > 1$$
This is not possible if we assume that $\sigma$ has length $\leq 1$.
\end{proof}

If $\sigma$ intersects a thickened line segment in some rook, we will just assume that $\sigma$ passed through the thickened line segment. This causes $l_{n+1}$ to decrease, but fortunately this cannot happen often. Indeed, note that $2k_n v_n$, is a very small percentage of $t_n$, since by $(\ref{kn})$,
$$\frac{2k_nv_n}{t_n} = \frac{\delta_n d_n}{w_n}.$$
This quantity tends to $0$ very rapidly by $(\ref{deltan})$.

This observation allows us to prove Theorem \ref{Main}.

\begin{proof}[Proof of the Main Theorem]
Again, let $P$ be the straight piece of $\Gamma_n$ that $\sigma$ crosses. Again we denote $l_n = l(\sigma \cap \Gamma_n)$. 

$\sigma$ must travel between each adjacent element of the plumbing foliation for $\Gamma_n$. Therefore, to estimate $l_{m} - l_{m+1}$, we can use all of the same estimates from Lemma \ref{segment}; we just have to additionally discard the length of the excess vertical segments that $\sigma$ can visit. But by Lemma \ref{excessrooks}, this number is $2 k_n v_n$, so the amount of excess length discarded is no more than 
$$k_n v_n \cdot \frac{2w_n}{t_{n}} = 2 \delta_n d_n.$$
At the next stage, $\Gamma_{n+1}$, the amount of excess length discarded is no more than
$$t_n k_{n+1} v_{n+1} \cdot \frac{2w_{n+1}}{t_{n+1}} \leq t_n \delta_{n+1} d_{n+1} \leq \frac{w_n}{w_{n+1}}\delta_{n+1}d_{n+1}\leq w_n \delta_{n+1}.$$
We used $(\ref{wntn})$ in the second inequality. Similarly, at the $m$th stage, the amount of excess length discarded compared to a vertical line segment is no more than
$$w_n \delta_{m}.$$
So the amount of excess length discarded compared to a vertical segment is no more than
$$w_n \sum_{j=n}^{\infty} \delta_j < \frac{w_n}{100}.$$
This combined with the estimates in Lemma \ref{segment} show that
$$l(\sigma \cap \Gamma) > w_n\left(\frac{9}{10} - \frac{1}{100}\right) > 0$$
\end{proof}

\bibliographystyle{alpha}
\bibliography{Pdim}

\end{document}